\begin{document}

\title{Complete left-invariant affine structures on solvable non-unimodular
three-dimensional Lie groups}
\author{Mohammed Guediri and Kholoud Al-Balawi}
\maketitle

\begin{abstract}
In this paper, we shall use a method based on the theory of extensions of
left-symmetric algebras to classify complete left-invariant affine real
structures on solvable non-unimodular three-dimensional Lie groups.
\end{abstract}

\renewcommand{\thefootnote}{} \footnotetext{%
This project was supported by King Saud University, Deanship of Scientific
Research, College of Science Research Center.
\par
\textsl{2010 MSC:} 17A30, 17B30, 53C05.
\par
\textsl{Keywords:} Extensions of left-symmetric algebras, Left-invariant
affine connections, Novikov algebras.
\par
{}}

\section{Introduction}

The notion of a left-symmetric algebra appeared for the first time in the
work of Koszul \cite{koszul} and Vinberg \cite{vinberg} concerning bounded
homogeneous domains and convex homogeneous cones, respectively. Over the
field of real numbers, left-symmetric algebras are of special interest
because of their role in the differential geometry of affine manifolds
(i.e., smooth manifolds with flat torsion-free affine connections), and in
the representation theory of Lie groups (see \cite{milnor2} and \cite{segal}%
). In fact, for a given simply connected Lie group $G$ with Lie algebra $%
\mathcal{G}$, the left-invariant affine structures on $G$ are in one-to-one
correspondence with the left-symmetric structures on $\mathcal{G}$
compatible with the Lie structure \cite{kim1}.

On the other hand, it is well known that there is a one-to-one
correspondence between left-invariant affine structures on a Lie group $G$
and locally simply transitive affine actions of $G$ on an $n$-dimensional
real vector space $V$ (see \cite{kim1}). The classification of
left-invariant affine structures on a given Lie group $G$ is then reduced to
the classification of compatible left-symmetric products on the Lie algebra $%
\mathcal{G}$ of $G.$ It has been proved in \cite{auslander} that a simply
connected Lie group $G$ which acts simply transitively on $\mathbb{R}^{n}$
by affine transformations is necessarily solvable. Since a few years, there
has been a growing interest in the study of simply transitive affine actions
of Lie groups on $\mathbb{R}^{n}.$ This interest is mostly due to the
example of Benoist \cite{benoist}, who constructed a simply connected
nilpotent Lie group not admitting any locally simply transitive affine
action on $\mathbb{R}^{n}.$ This example provided a negative answer to the
following question of Milnor \cite{milnor2}: Does any simply connected
solvable Lie group admit a simply transitive affine action on $\mathbb{R}%
^{n}?$

From another point of view, there is also the question of classifying all
simply transitive affine actions of a given solvable Lie group $G$ admitting
such an action. This question, even in the abelian case $G=\mathbb{R}^{k},$
seems to be very hard. When $G$ is nilpotent, the classification has been
completely achieved up to dimension four (\cite{fried-gold} and \cite{kim1}).

\medskip 

Recently, a method based on the theory of extensions of left-symmetric
algebras has been proposed in \cite{oscillator} to classify complete
left-invariant affine real structures on a given solvable Lie group of low
dimension. Since the classification in the case of solvable unimodular Lie
groups of dimension three was obtained in \cite{fried-gold}, we will use
that method to carry out in this paper the classification of complete
left-invariant affine structures on three-dimensional solvable
non-unimodular Lie groups.

\medskip 

The paper is organized as follows. In section 2, we will briefly recall some
necessary definitions and basic results on left-symmetric algebras and their
extensions. In section 3, using the classification of the three-dimensional
complex simple left-symmetric algebras given in \cite{burde} and a result in 
\cite{kong-bai-meng}, we shall first show that any complete real
left-symmetric algebra $A_{3}$ of dimension 3 whose Lie algebra is solvable
and non-unimodular is not simple. Therefore, we can get $A_{3}$ as an
extension of complete left-symmetric algebras. By using the Lie group
exponential maps, we shall deduce the classification of all complete
left-invariant affine structures on solvable non-unimodular Lie groups of
dimension 3 in terms of simply transitive actions of subgroups of the affine
group $Aff(\mathbb{R}^{3})=GL\left( \mathbb{R}^{3}\right) \mathbb{o}\mathbb{R%
}^{3}$ (see Theorem \ref{sim action}).

\smallskip

Throughout this paper, all considered vector spaces, Lie algebras, and
left-symmetric algebras are supposed to be over the field $\mathbb{R}.$ We
shall also suppose that all considered Lie groups are simply connected.

\section{Left-symmetric algebras and their extensions}

Let $A$ be a finite-dimensional vector space over $\mathbb{R}$. A
left-symmetric product on $A$ is a bilinear product that we denote by $%
x\cdot y$ satisfying 
\begin{equation}
\left( x\cdot y\right) \cdot z-\left( y\cdot x\right) \cdot z=x\cdot \left(
y\cdot z\right) -y\cdot \left( x\cdot z\right) ,  \label{l.s. def}
\end{equation}
for all $x,y,z\in A$. In this case, $A$ together with a left-symmetric
product is called left-symmetric algebra.

Now if $A$ is a left-symmetric algebra, then the commutator 
\begin{equation}
\left[ x,y\right] =x\cdot y-y\cdot x  \label{lie bracket}
\end{equation}
defines a structure of Lie algebra on $A$, called the associated Lie
algebra. On the other hand, if $\mathcal{G}$ is a Lie algebra with a
left-symmetric product $\cdot $ satisfying (\ref{lie bracket})$,$ then we
say that this left-symmetric structure is compatible with the Lie structure
on $\mathcal{G}$.

Let $G$ be a simply connected Lie group with a left-invariant affine
connection $\nabla .$ Define a product $\cdot $ on the Lie algebra $\mathcal{%
G}$ of $G$ by 
\begin{equation*}
x\cdot y=\nabla _{x}y,
\end{equation*}
for all $x,y\in \mathcal{G}$. Then, the flat and torsion-free conditions on $%
\nabla $ correspond to conditions (\ref{l.s. def}) and (\ref{lie bracket}),
respectively.

Conversely, If $G$ is a simply connected Lie group with Lie algebra $%
\mathcal{G}$ and $x\cdot y$ denotes a left-symmetric product on $\mathcal{G}$
compatible with the Lie bracket, then the left-invariant connection given by 
$\nabla _{x}y=x\cdot y$ defines a left-invariant affine structure $\nabla $
on $G.$ We deduce that if $G$ is a simply connected Lie group with Lie
algebra $\mathcal{G}$, then the study of left-invariant affine structures on 
$G$ is equivalent to the study of left-symmetric structures on $\mathcal{G}$
compatible with the Lie structure.

Let $A$ be a left-symmetric algebra whose associated Lie algebra is $%
\mathcal{G}$, and let $L_{x}$ and $R_{x}$ denote the left and right
multiplications, respectively i.e., $L_{x}y=x\cdot y$ and $R_{x}y=y\cdot x$.
The identity in (\ref{l.s. def}) is now equivalent to the formula 
\begin{equation*}
\left[ L_{x},L_{y}\right] =L_{\left[ x,y\right] },\text{\quad for all }%
x,y\in A,
\end{equation*}
or, in other words, the linear map $L:\mathcal{G\rightarrow }End\left(
A\right) $ is a representation of Lie algebras.

If a left-symmetric algebra $A$ has no proper two-sided ideal and it is not
the zero algebra of dimension $1$, then $A$ is called simple. $A$ is called
semisimple, if it is a direct sum of simple left-symmetric algebras.

We say that $A$ is complete if $R_{x}$ is a nilpotent operator for all $x\in
A.$ It turns out that, for a given simply connected Lie group $G$ with Lie
algebra $\mathcal{G}$, the complete left-invariant affine structures on $G$
are in one-to-one correspondence with the complete left-symmetric structures
on $\mathcal{G}$ compatible with the Lie structure. It is also known that an 
$n$-dimensional simply connected Lie group admits a complete left-invariant
affine structure if and only if it acts simply transitively on $\mathbb{R}%
^{n}$ by affine transformations (see \cite{kim1}). A simply connected Lie
group which is acting simply transitively on $\mathbb{R}^{n}$ by affine
transformations must be solvable according to \cite{auslander}. It is well
known that not every solvable (even nilpotent) Lie group can admit an affine
structure (see \cite{benoist}).

We say that $A$ is a Novikov algebra if it satisfies the identity 
\begin{equation}
\left( x\cdot y\right) \cdot z=\left( x\cdot z\right) \cdot y,\text{\quad
for all }x,y,z\in A.  \label{Novikov cond.}
\end{equation}

In terms of left and right multiplications, (\ref{Novikov cond.}) is
equivalent to the formula 
\begin{equation*}
\left[ R_{x},R_{y}\right] =0,\text{\quad for all }x,y\in A.
\end{equation*}

The left-symmetric algebra $A$ is called a derivation algebra if it
satisfies the identity 
\begin{equation*}
\left( x\cdot y\right) \cdot z=\left( z\cdot y\right) \cdot x,\text{\quad
for all }x,y,z\in A,
\end{equation*}
or, equivalently, all left and right multiplications $L_{x}$ and $R_{x}$ are
derivations of $\mathcal{G}$.

\medskip Recall that a Lie algebra $\widetilde{\mathcal{G}}$ is an extension
of the Lie algebra $\mathcal{G}$ by the Lie algebra $\mathcal{A}$ if there
exists a short exact sequence of Lie algebras 
\begin{equation*}
0\overset{}{\rightarrow }\mathcal{A}\overset{i}{\rightarrow }\widetilde{%
\mathcal{G}}\overset{\pi }{\rightarrow }\mathcal{G}\overset{}{\rightarrow }0.
\end{equation*}

In other words, $\mathcal{A}$ is an ideal of $\widetilde{\mathcal{G}}$ such
that $\widetilde{\mathcal{G}}/\mathcal{A}$ $\cong \mathcal{G}$.

For $\left( x,a\right) $ and $\left( y,b\right) $ in $\widetilde{\mathcal{G}}
$ $\cong \mathcal{G}\oplus \mathcal{A}$, the extended Lie bracket is given
by 
\begin{equation}
\left[ \left( x,a\right) ,\left( y,b\right) \right] =\left( \left[ x,y\right]
,\left[ a,b\right] +\phi \left( x\right) b-\phi \left( y\right) a+\omega
\left( x,y\right) \right) ,  \label{ext Lie bracket}
\end{equation}
where $\phi :\mathcal{G\rightarrow }Der\left( \mathcal{A}\right) $ is a
linear map and $\omega :\mathcal{G\times G}\rightarrow \mathcal{A}$ is an
alternating bilinear map such that 
\begin{equation*}
\left[ \phi \left( x\right) ,\phi \left( y\right) \right] =\phi \left( \left[
x,y\right] \right) +ad_{\omega \left( x,y\right) },
\end{equation*}
and 
\begin{equation*}
\omega \left( \left[ x,y\right] ,z\right) -\omega \left( x,\left[ y,z\right]
\right) +\omega \left( y,\left[ x,z\right] \right) =\phi \left( x\right)
\omega \left( y,z\right) +\phi \left( y\right) \omega \left( z,x\right)
+\phi \left( z\right) \omega \left( x,y\right) .
\end{equation*}

Note here that if $\mathcal{A}$ is abelian, then $\omega $ is a 2-cocycle.
(For more details, we refer to \cite{neeb} and \cite{jacobson}).

Now we shall briefly discuss the problem of extension of a left-symmetric
algebra by another left-symmetric algebra. To our knowledge, the notion of
extensions of left-symmetric algebras has been considered for the first time
in \cite{kim1}, to which we refer the reader for more details. See also \cite%
{chang-kim}.

Suppose that a vector space extension $\widetilde{A}$ of a left-symmetric
algebra $A$ by another left-symmetric algebra $E$ is given. We want to
define a left-symmetric structure on $\widetilde{A}$ in terms of the
left-symmetric structures given on $A$ and $E.$ In other words, we want to
define a left-symmetric product on $\widetilde{A}$ for which $E$ becomes a
two-sided ideal in $\widetilde{A}$ such that $\widetilde{A}/E$ $\cong A$; or
equivalently, 
\begin{equation*}
0\overset{}{\rightarrow }E\overset{}{\rightarrow }\widetilde{A}\overset{}{%
\rightarrow }A\overset{}{\rightarrow }0
\end{equation*}
becomes a short exact sequence of left-symmetric algebras.

\begin{theorem}[\protect\cite{kim1}]
\label{kim}There exists a left-symmetric structure on $\widetilde{A}$
extending a left-symmetric algebra $A$ by a left-symmetric algebra $E$ if
and only if there exist two linear maps $\lambda ,\rho :A\rightarrow $ $%
End(E)$ and a bilinear map $g:A\times A\rightarrow E$ such that, for all $%
x,y,z\in A$ and $a,b\in E,$ the following conditions are satisfied.

\begin{enumerate}
\item $\lambda _{x}\left( a\cdot b\right) =\lambda _{x}(a)\cdot b+a\cdot
\lambda _{x}(b)-\rho _{x}(a)\cdot b,$

\item $\rho _{x}\left( \left[ a,b\right] \right) =a\cdot \rho _{x}(b)-b\cdot
\rho _{x}(a),$

\item $[\lambda _{x},\lambda _{y}]-\lambda _{[x,y]}=L_{g(x,y)-g(y,x)},$

\item $[\lambda _{x},\rho _{y}]+$ $\rho _{y}\circ \rho _{x}-\rho _{x\cdot
y}=R_{g(x,y)}$

\item $g\left( x,y\cdot z\right) -g\left( y,x\cdot z\right) $ $+\lambda
_{x}\left( g\left( y,z\right) \right) -\lambda _{y}$ $\left( g\left(
x,z\right) \right) -$ $g\left( \left[ x,y\right] ,z\right) $

$-\rho _{z}\left( g\left( x,y\right) -g\left( y,x\right) \right) =0.$
\end{enumerate}
\end{theorem}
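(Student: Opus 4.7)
The approach is to fix a linear section $s : A \to \widetilde{A}$ of the vector space extension and identify $\widetilde{A}$ with $A \oplus E$ as a vector space. Because $E$ is required to be a two-sided ideal with quotient $A$, an arbitrary product $(s(x) + a) \cdot (s(y) + b)$ in $\widetilde{A}$ automatically decomposes as
\[
(x, a) \cdot (y, b) = \bigl(x \cdot y,\; a \cdot b + \lambda_{x}(b) + \rho_{y}(a) + g(x, y)\bigr),
\]
where $\lambda_{x}(a) := s(x) \cdot a$, $\rho_{x}(a) := a \cdot s(x)$, and $g(x, y) := s(x) \cdot s(y) - s(x \cdot y)$ all land in $E$ (by ideal-ness and because $\pi : \widetilde{A} \to A$ is an LSA morphism). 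I would use this single formula in both directions: forward, it records the data $(\lambda, \rho, g)$ extracted from the extension; backward, it defines the product on $A \oplus E$ from prescribed data, and $E = \{0\} \oplus E$ is then a two-sided ideal with quotient $A$ by construction.

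Both directions reduce to checking the left-symmetric identity
\[
(u \cdot v) \cdot w - (v \cdot u) \cdot w \;=\; u \cdot (v \cdot w) - v \cdot (u \cdot w)
\]
on triples $(u, v, w)$, with each entry chosen from either $s(A)$ or $E$. Using antisymmetry in $(u, v)$, only six cases survive. The plan is to run through them in order: the all-$E$ case is the LSA axiom on $E$ and is automatic; the case (two $s(A)$'s, $E$ third) yields condition~3 via $[\lambda_{x}, \lambda_{y}] - \lambda_{[x,y]} = L_{g(x,y)-g(y,x)}$; (mixed first two, $s(A)$ third) yields condition~4; (mixed first two, $E$ third) yields condition~1; (two $E$'s, $s(A)$ third) yields condition~2 via $\rho_{z}(a\cdot b) - \rho_{z}(b\cdot a) = \rho_{z}([a,b])$; and the all-$s(A)$ case yields condition~5 on the $E$-component, while its $A$-component recovers the LSA axiom already assumed on $A$.

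The main obstacle is the all-$s(A)$ case. Expanding $s(x) \cdot s(y) = s(x \cdot y) + g(x, y)$ inside both $(s(x) \cdot s(y)) \cdot s(z)$ and $s(x) \cdot (s(y) \cdot s(z))$ produces cross-terms $\rho_{z}(g(x, y))$, $\lambda_{x}(g(y, z))$, and $g(x \cdot y, z)$; antisymmetrizing in $x, y$ and using bilinearity of $g$ matches these with $g([x,y], z)$, $\rho_{z}(g(x,y) - g(y,x))$, and $\lambda_{x}(g(y,z)) - \lambda_{y}(g(x,z))$, assembling exactly the identity in condition~5. The remaining four conditions involve at most one occurrence of $g$ and come from a single collection of cancellations, so once the bookkeeping in the delicate case is carried out correctly the rest is routine.
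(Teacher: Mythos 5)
Your proposal is correct: the section--based identification $\widetilde{A}\cong A\oplus E$, the extraction of $\lambda_x(a)=s(x)\cdot a$, $\rho_x(a)=a\cdot s(x)$, $g(x,y)=s(x)\cdot s(y)-s(x\cdot y)$, and the six-case check of the left-symmetric identity (with each case matching the condition you assign to it) is exactly the standard argument behind this theorem. The paper itself states the result without proof, citing \cite{kim1}, so there is nothing further to compare against; your sketch fills in precisely what that reference does.
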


If the conditions of the above theorem are fulfilled, then the extended
left-symmetric product on $\widetilde{A}\cong A\times E$ is given by 
\begin{equation}
\left( x,a\right) \cdot \left( y,b\right) =\left( x\cdot y,a\cdot b+\lambda
_{x}\left( b\right) +\rho _{y}\left( a\right) +g\left( x,y\right) \right) .
\label{product}
\end{equation}

It is remarkable that if the left-symmetric product of $E$ is trivial, then
the conditions of the above theorem simplify to the following three
conditions:

\begin{description}
\item[(i)] $\left[ \lambda _{x},\lambda _{y}\right] =\lambda _{\left[ x,y%
\right] },$ i.e. $\lambda $ is a representation of Lie algebras,

\item[(ii)] $\left[ \lambda _{x},\rho _{y}\right] =\rho _{x\cdot y}-\rho
_{y}\circ \rho _{x}.$

\item[(iii)] $g\left( x,y\cdot z\right) -g\left( y,x\cdot z\right) +\lambda
_{x}\left( g\left( y,z\right) \right) -\lambda _{y}\left( g\left( x,z\right)
\right) -g\left( \left[ x,y\right] ,z\right) $

\item $-\rho _{z}\left( g\left( x,y\right) -g\left( y,x\right) \right) =0.$
\end{description}

\smallskip In this case, $E$ becomes a $A$-bimodule and the extended product
given in (\ref{product}) simplifies too. Recall that if $K$ is a
left-symmetric algebra and $V$ is a vector space, then we say that $V$ is a $%
K$-bimodule if there exist two linear maps $\lambda ,$ $\rho :K\rightarrow
End\left( V\right) $ which satisfy the conditions (i) and (ii) stated above.

\bigskip

Let $K$ be a left-symmetric algebra, and suppose that a $K$-bimodule $V$ is
known. We denote by $L^{p}\left( K,V\right) $ the space of all $p$-linear
maps from $K$ to $V$, and we define two coboundary operators $\delta
_{1}:L^{1}\left( K,V\right) \rightarrow L^{2}\left( K,V\right) $ and $\delta
_{2}:L^{2}\left( K,V\right) \rightarrow L^{3}\left( K,V\right) $ as follows:

For a linear map $h\in L^{1}\left( K,V\right) $ we set 
\begin{equation}
\delta _{1}h\left( x,y\right) =\rho _{y}\left( h\left( x\right) \right)
+\lambda _{x}\left( h\left( y\right) \right) -h\left( x\cdot y\right) ,
\label{cob1}
\end{equation}
and for a bilinear map $g\in L^{2}\left( K,V\right) $ we set

\begin{eqnarray}
\delta _{2}g\left( x,y,z\right) &=&g\left( x,y\cdot z\right) -g\left(
y,x\cdot z\right) +\lambda _{x}\left( g\left( y,z\right) \right) -\lambda
_{y}\left( g\left( x,z\right) \right)  \label{cob2} \\
&&-g\left( \left[ x,y\right] ,z\right) -\rho _{z}\left( g\left( x,y\right)
-g\left( y,x\right) \right)  \notag
\end{eqnarray}
where $\lambda $ and $\rho $ are linear maps $\lambda ,$ $\rho :K\rightarrow
End\left( V\right) .$

It is straightforward to check that $\delta _{2}\circ \delta _{1}=0.$
Therefore, if we set $Z_{\lambda ,\rho }^{2}\left( K,V\right) =\ker \delta
_{2}$ and $B_{\lambda ,\rho }^{2}\left( K,V\right) =\func{Im}\delta _{1},$
we can define a notion of second cohomology for the actions $\lambda $ and $%
\rho $ by simply setting $H_{\lambda ,\rho }^{2}\left( K,V\right)
=Z_{\lambda ,\rho }^{2}\left( K,V\right) /B_{\lambda ,\rho }^{2}\left(
K,V\right) .$ As in the case of Lie algebras, we can prove the following
(see \cite{kim1}).

\begin{proposition}
\label{class}For given linear maps $\lambda ,$ $\rho :K\rightarrow End\left(
V\right) ,$ the equivalent classes of extensions 
\begin{equation*}
0\rightarrow V\rightarrow A\rightarrow K\rightarrow 0
\end{equation*}
of $K$ by $V$ are in one-to-one correspondence with the elements of the
second cohomology group $H_{\lambda ,\rho }^{2}\left( K,V\right) .$
\end{proposition}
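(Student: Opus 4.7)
The plan is to mimic the classical extension-cocycle correspondence of Lie or associative algebras, carried out here with the simplified conditions (i)--(iii) that follow Theorem \ref{kim}. Throughout, I interpret an ``extension of $K$ by $V$'' in the sense of the proposition to be a short exact sequence $0 \to V \to A \to K \to 0$ of left-symmetric algebras in which $V$ is an abelian ideal inducing on itself the prescribed bimodule actions $\lambda,\rho$.

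Given such an extension, I first pick any linear section $s\colon K \to A$ of $\pi$ and set $g(x,y) := s(x)\cdot s(y) - s(x\cdot y) \in V$. Expanding the left-symmetric identity $(\ref{l.s. def})$ on $s(x), s(y), s(z)$, substituting $s(x)\cdot s(y) = s(x\cdot y)+g(x,y)$ throughout, and using the bimodule identities to rewrite terms of the form $s(x)\cdot g(y,z)$ and $g(x,y)\cdot s(z)$ as $\lambda_x g(y,z)$ and $\rho_z g(x,y)$, I would obtain exactly $\delta_2 g = 0$ in the notation of $(\ref{cob2})$. Next, I would check that the class $[g] \in H^2_{\lambda,\rho}(K,V)$ is an invariant: a second section $s' = s+h$ with $h\colon K\to V$ produces a cocycle $g'$ satisfying $g'-g = \delta_1 h$, as a direct expansion using the triviality of the product on $V$ and the bimodule axioms shows; and if $\Phi\colon A \to A'$ is an equivalence of extensions, then $\Phi\circ s$ is a section of $A'$ with the same associated cocycle.

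For the inverse direction, given any $g \in Z^2_{\lambda,\rho}(K,V)$ I would endow $A_g := K\oplus V$ with the product
\[
(x,a)\cdot(y,b) = \bigl(x\cdot y,\; \lambda_x(b) + \rho_y(a) + g(x,y)\bigr),
\]
which is $(\ref{product})$ specialized to the trivial product on $V$. Theorem \ref{kim} in its simplified form tells us this product is left-symmetric precisely because the bimodule conditions (i)--(ii) hold and $\delta_2 g = 0$. The evident inclusion and projection exhibit $A_g$ as an extension whose canonical section $x \mapsto (x,0)$ returns $g$ as its cocycle; and if $g' = g + \delta_1 h$, then $\Phi(x,a) := (x, a+h(x))$ is an equivalence $A_g \to A_{g'}$. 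These constructions are mutually inverse.

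The main point of labor, and where the proof can go wrong, is the identification in the first step of the $V$-component of the three-variable left-symmetric identity on $s(x), s(y), s(z)$ with $\delta_2 g(x,y,z)$ as written in $(\ref{cob2})$. All six terms of $(\ref{l.s. def})$ have to be expanded carefully, and the bimodule axioms (i)--(ii) invoked precisely where the defects $g$ produce ``crossed'' products between $V$ and $s(K)$; in particular, the asymmetric term $\rho_z(g(x,y)-g(y,x))$ arises because the left-hand side of $(\ref{l.s. def})$ contains both $s(x)\cdot s(y)$ and $s(y)\cdot s(x)$ composed on the right with $s(z)$. Once this central computation is in place, the remaining items ($\delta_2\circ\delta_1 = 0$, independence of section, and the equivalence-vs.-coboundary correspondence) reduce to the same type of manipulation and parallel the classical Lie algebra proof line by line.
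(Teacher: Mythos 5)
The paper gives no proof of this proposition at all---it simply cites \cite{kim1} and remarks that the argument is ``as in the case of Lie algebras''---and your outline is exactly that standard section-versus-cocycle argument, correctly adapted to the coboundary operators (\ref{cob1})--(\ref{cob2}) and the product (\ref{product}), so it supplies precisely what the paper leaves implicit. The only slip is a harmless sign at the end: with $\Phi(x,a)=(x,a+h(x))$ one obtains an equivalence $A_{g}\rightarrow A_{g-\delta _{1}h}$ rather than $A_{g}\rightarrow A_{g+\delta _{1}h}$, which of course does not affect the bijection with classes in $H_{\lambda ,\rho }^{2}\left( K,V\right) $.
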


A left-symmetric algebras extension

\begin{equation*}
0\overset{}{\rightarrow }E\overset{i}{\rightarrow }\widetilde{A}\overset{\pi 
}{\rightarrow }A\overset{}{\rightarrow }0
\end{equation*}
is called central if and only if $i\left( E\right) \subseteq C\left( 
\widetilde{A}\right) $ where 
\begin{equation*}
C\left( \widetilde{A}\right) =\left\{ x\in \widetilde{A}:x\cdot y=y\cdot
x=0\right\}
\end{equation*}
is the center of $\widetilde{A}.$ In particular, the extension is central
whenever $E$ is a trivial $A$-bimodule (i.e., $\lambda =\rho =0$).

We say that the extension is exact if and only if $i\left( E\right) =C\left( 
\widetilde{A}\right) .$ It is easy to verify (see \cite{kim1}) that the
extension is exact if and only if $I_{[g]}=0,$ where

\begin{equation*}
I_{[g]}=\left\{ x\in A:x\cdot y=y\cdot x=0\text{ and }g(x,y)=g(y,x)=0\text{
for all }y\in A\right\}
\end{equation*}

We observe that $I_{[g]}$ is depends only on the cohomology class of $g,$
that is $I_{[g]}$ is well defined.

In case $E$ is a trivial $A$-bimodule, we denote the central extension
corresponding to the class $[g]\in H^{2}\left( A,E\right) $ by $\left( 
\widetilde{A},\left[ g\right] \right) .$

Let $\left( \widetilde{A},\left[ g\right] \right) $ and $\left( \widetilde{A}%
^{^{\prime }},\left[ g^{^{\prime }}\right] \right) $ be two central
extensions of $A$ by $E,$ and $\mu \in Aut\left( E\right) =GL\left( E\right) 
$ and $\eta \in Aut\left( A\right) ,$ where $Aut\left( E\right) $ and $%
Aut\left( A\right) $ are the groups of left-symmetric automorphisms of $E$
and $K,$ respectively. It is clear that if, $h\in L^{1}\left( A,E\right) ,$
then the linear mapping $\psi :\widetilde{A}\rightarrow \widetilde{A}%
^{^{\prime }}$ defined by 
\begin{equation*}
\psi \left( x,a\right) =\left( \eta \left( x\right) ,\mu \left( a\right)
+h\left( x\right) \right)
\end{equation*}
is an isomorphism provided $g^{^{\prime }}\left( \eta \left( x\right) ,\eta
\left( y\right) \right) =\mu \left( g\left( x,y\right) \right) +\delta
_{1}h\left( x,y\right) $ for all $\left( x,y\right) \in A\times A,$ i.e., $%
\eta ^{*}\left[ g^{^{\prime }}\right] =\mu _{*}\left[ g\right] .$

This allows us to define an action of the group $G=Aut\left( E\right) \times
Aut\left( A\right) $ on $H^{2}\left( A,E\right) $ by setting 
\begin{equation*}
\left( \mu ,\eta \right) \cdot \left[ g\right] =\mu _{*}\eta ^{*}\left[ g%
\right]
\end{equation*}
or equivalently, $\left( \mu ,\eta \right) \cdot g\left( x,y\right) =\mu
\left( g\left( \eta \left( x\right) ,\eta \left( y\right) \right) \right) $
for all $x,y\in A.$

Denoting the set of all exact central extensions of $A$ by $E$ by 
\begin{equation*}
H_{ex}^{2}\left( A,E\right) =\left\{ \left[ g\right] \in H^{2}\left(
A,E\right) :I_{[g]}=0\right\}
\end{equation*}%
and the orbit of $\left[ g\right] $ by $G_{\left[ g\right] },$ it turns out
that the following result is valid (see \cite{kim1}).

\begin{proposition}
\label{isomo} Let $\left[ g\right] $ and $\left[ g^{^{\prime }}\right] $ be
two classes in $H_{ex}^{2}\left( A,E\right) .$ Then, the central extensions $%
\left( \widetilde{A},\left[ g\right] \right) $ and $\left( \widetilde{A}%
^{^{\prime }},\left[ g^{^{\prime }}\right] \right) $ are isomorphic if and
only if $G_{\left[ g\right] }=G_{\left[ g^{^{\prime }}\right] }.$ In other
words, the classification of the exact central extensions of $A$ by $E$ is,
up to left-symmetric isomorphism, the orbit space of $H_{ex}^{2}\left(
A,E\right) $ under the natural action of $G=Aut\left( E\right) \times
Aut\left( A\right) .$
\end{proposition}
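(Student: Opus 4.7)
The plan is to prove both directions of the equivalence, the forward direction being essentially the content of the construction given just before the proposition, and the reverse direction relying crucially on the exactness hypothesis to rigidify abstract isomorphisms into the pair $(\mu,\eta)$.

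For the ``if'' direction, I would start from the assumption that $G_{[g]}=G_{[g']}$. This gives $\mu\in Aut(E)$ and $\eta\in Aut(A)$ with $(\mu,\eta)\cdot[g]=[g']$, so by definition there exists $h\in L^{1}(A,E)$ such that $g'(\eta(x),\eta(y))=\mu(g(x,y))+\delta_{1}h(x,y)$ for all $x,y\in A$. By the paragraph immediately preceding the statement, the linear map $\psi(x,a)=(\eta(x),\mu(a)+h(x))$ is then an isomorphism $(\widetilde{A},[g])\to(\widetilde{A}',[g'])$, which settles this direction.

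For the converse, suppose $\psi:\widetilde{A}\to\widetilde{A}'$ is an isomorphism of left-symmetric algebras. Here I would use exactness: since $i(E)=C(\widetilde{A})$ and $i'(E)=C(\widetilde{A}')$, and any algebra isomorphism preserves the center, $\psi$ restricts to a linear isomorphism $i(E)\to i'(E)$. Transporting along $i,i'$ yields $\mu\in Aut(E)$; moreover $\mu$ is an automorphism of left-symmetric algebras because $E$ is identified with $C(\widetilde{A})$, whose product (inherited from $\widetilde{A}$) agrees with that of $E$. Since $\psi$ preserves centers, it descends to an isomorphism of quotients $\widetilde{A}/i(E)\to\widetilde{A}'/i'(E)$, and via $\pi,\pi'$ this defines $\eta\in Aut(A)$. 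Next I would pick a linear section $s:A\to\widetilde{A}$ of $\pi$, identify $\widetilde{A}\cong A\oplus E$ accordingly, and write $\psi(s(x))=s'(\eta(x))+i'(h(x))$ for a unique linear map $h:A\to E$; extending by $\mu$ on the $E$-component yields $\psi(x,a)=(\eta(x),\mu(a)+h(x))$. Finally, writing out $\psi((x,a)\cdot(y,b))=\psi(x,a)\cdot\psi(y,b)$ using the product formula (\ref{product}) for both extensions and projecting onto the $E$-component gives, after the $\lambda,\rho$ terms cancel (they are trivial since the bimodule is trivial), the identity $g'(\eta(x),\eta(y))=\mu(g(x,y))+\delta_{1}h(x,y)$. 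Thus $(\mu,\eta)\cdot[g]=[g']$, so $[g']\in G_{[g]}$ and the two orbits coincide.

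The main obstacle is the invocation of exactness in the converse direction. Without the assumption that $i(E)$ equals the full center $C(\widetilde{A})$, an isomorphism $\psi$ need not carry $i(E)$ onto $i'(E)$, and the clean decomposition of $\psi$ into an automorphism of $E$ and an induced automorphism of $A$ breaks down; it is precisely for this reason that the orbit space $H^{2}_{ex}(A,E)/G$ parametrises extensions only once one restricts to exact classes. Aside from this point, the argument is mechanical: the only substantive computation is the verification that the compatibility of $\psi$ with the left-symmetric products collapses, in the $E$-component, to the cochain relation defining the action of $G$ on $H^{2}(A,E)$.
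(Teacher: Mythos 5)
Your proof is correct, and it fills in exactly the argument the paper leaves implicit: the paper states this proposition without proof (deferring to \cite{kim1}), with the forward direction already contained in the displayed criterion $g'(\eta(x),\eta(y))=\mu(g(x,y))+\delta_{1}h(x,y)$ from the preceding paragraph. Your converse is the intended one — exactness forces any isomorphism $\psi$ to carry $i(E)=C(\widetilde{A})$ onto $i'(E)=C(\widetilde{A}')$, whence the decomposition $\psi(x,a)=(\eta(x),\mu(a)+h(x))$ and the cochain identity (up to the harmless sign $\delta_{1}h=-h(x\cdot y)$ in the trivial-bimodule case) — so there is nothing to add.
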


We close this section by the following important result (compare to \cite%
{chang-kim})

\begin{proposition}
\label{completeness}Let $0\rightarrow I\overset{}{\rightarrow }A\overset{}{%
\rightarrow }J\rightarrow 0$ be an exact sequence of left-symmetric algebras
such that $A$ is complete. Then, $I$ and $J$ are complete.
\end{proposition}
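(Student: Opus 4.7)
The plan is to show completeness passes to ideals and to quotients separately, using only that $\pi$ and the inclusion intertwine the relevant right-multiplication operators.

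First I would handle the ideal $I$. Since $I$ is a two-sided ideal of $A$, for every $x \in I$ the right multiplication $R^I_x : I \to I$ in $I$ is just the restriction of $R^A_x : A \to A$ to the invariant subspace $I$. Completeness of $A$ gives an integer $n$ such that $(R^A_x)^n = 0$; restricting to $I$ yields $(R^I_x)^n = 0$, so $R^I_x$ is nilpotent and $I$ is complete.

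Next I would deal with the quotient $J$. Let $\pi : A \to J$ denote the surjective left-symmetric algebra homomorphism. For any $x \in A$ and $y \in A$, the homomorphism property gives
\begin{equation*}
\pi\bigl(R^A_x(y)\bigr) = \pi(y \cdot x) = \pi(y)\cdot \pi(x) = R^J_{\pi(x)}\bigl(\pi(y)\bigr),
\end{equation*}
so $\pi \circ R^A_x = R^J_{\pi(x)} \circ \pi$. An immediate induction yields $\pi \circ (R^A_x)^n = (R^J_{\pi(x)})^n \circ \pi$ for every $n \geq 1$. Choosing $n$ with $(R^A_x)^n = 0$, and using that $\pi$ is surjective, this forces $(R^J_{\pi(x)})^n = 0$ on $J$. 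Since every element of $J$ has the form $\pi(x)$ for some $x \in A$, each right multiplication in $J$ is nilpotent, so $J$ is complete.

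There is essentially no obstacle here beyond bookkeeping; the only thing to be careful about is that completeness is a condition on $R_x$ (not $L_x$), and one must invoke the right intertwining identities with $\pi$ (respectively the restriction to $I$) rather than mix left and right multiplications. No cohomological machinery or use of Theorem \ref{kim} is required; the statement is purely an observation about nilpotency of operators under restriction to invariant subspaces and under surjections that intertwine them.
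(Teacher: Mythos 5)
Your proposal is correct and follows essentially the same route as the paper: completeness of $I$ by restricting $R_x$ to the invariant subspace $I$, and completeness of $J$ by observing that $\pi$ intertwines $R^A_x$ with $R^J_{\pi(x)}$ (the paper phrases this in coset notation as $R_x\mid_J(\overline{y})=R_xy+I$) so that nilpotency descends to the quotient. No substantive difference to report.
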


\begin{proof}
Let $A$ be a complete left-symmetric algebra. Then $R_{x}$ is nilpotent for
all $x\in A.$ Since $I$ is an ideal of $A$, then $R_{x}$ is nilpotent for
all $x\in I,$ that is $I$ is complete. On the other hand, Since $J\cong A/I,$
we can define for $x\in A,$ $R_{x}\mid _{J}:J\rightarrow J,$ by $R_{x}\mid
_{J}\left( \overline{y}\right) =R_{x}y+I$ for all $y\in A,$ $\overline{y}%
=y+I.$ Since for all $y_{1},y_{2}\in A$ such that $y_{1}+I=y_{2}+I$ there
exists $z\in I$ so that $y_{2}=y_{1}+z,$ and 
\begin{eqnarray*}
R_{x}\left( y_{2}+I\right)  &=&R_{x}y_{2}+I \\
&=&R_{x}\left( y_{1}+z\right) +I \\
&=&R_{x}y_{1}+R_{x}z+I \\
&=&R_{x}y_{1}+I \\
&=&R_{x}\left( y_{1}+I\right) 
\end{eqnarray*}%
then, $R_{x}\mid _{J}$ is well defined. We also have, for all $x,y\in A,$
that%
\begin{eqnarray*}
R_{\overline{x}}\overline{y} &=&\left( y+I\right) \cdot \left( x+I\right)  \\
&=&y\cdot x+I \\
&=&R_{x}y+I \\
&=&R_{x}\overline{y}
\end{eqnarray*}

Thus, to prove that $J$ is complete, it is enough to prove that $R_{x}\mid
_{J}$ is nilpotent for all $x\in A$. Since $R_{x}$ is nilpotent, then $%
R_{x}^{k}=0$ for some $k\in \mathbb{N}$. This implies that 
\begin{equation*}
R_{x}^{k}\left( y\right) +I=I=\overline{0}
\end{equation*}%
for all $y\in A.$ Hence, $R_{x}^{k}\left( \overline{y}\right) =0$ for all $%
\overline{y}\in J,$ that is $R_{x}\mid _{J}$ is nilpotent for all $x\in A,$
and hence $J$ is complete.
\end{proof}

\section{Complete left-symmetric structures on solvable non-unimodular Lie
algebras of dimension 3}

Recall that a Lie algebra $\mathcal{G}$ is unimodular if and only if tr$%
\left( ad_{x}\right) =0$ for all $x\in \mathcal{G}$. The classification of
solvable non-unimodular Lie algebras of dimension 3 can be found in \cite%
{milnor1}.

\begin{lemma}
\label{milnorj}Let $\mathcal{G}$ be a solvable non-unimodular Lie algebra of
dimension 3. Then, there is a basis $\left\{ e_{1},e_{2},e_{3}\right\} $ of $%
\mathcal{G}$ so that 
\begin{eqnarray*}
\left[ e_{1},e_{2}\right] &=&\alpha e_{2}+\beta e_{3} \\
\left[ e_{1},e_{3}\right] &=&\gamma e_{2}+(2-\alpha )e_{3}
\end{eqnarray*}

If we exclude the case where $D$ is the identity matrix, then the
determinant $\det D=\alpha (2-\alpha )-\beta \gamma $ provides a complete
isomorphism invariant for this Lie algebra.
\end{lemma}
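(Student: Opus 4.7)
The plan is to reproduce Milnor's standard three-dimensional argument. The key structural input is the codimension-$1$ ideal
\[
\mathcal{H} \;=\; \ker\bigl(x \mapsto \operatorname{tr}(ad_x)\bigr),
\]
which is $2$-dimensional by non-unimodularity. First I would show that $\mathcal{H}$ must be abelian: the only non-abelian Lie algebra of dimension two has a basis $\{u,v\}$ with $[u,v]=v$, and then $\operatorname{tr}(ad_u)=1\neq 0$, contradicting $u\in\mathcal{H}$. So $[e_2,e_3]=0$ for any basis of $\mathcal{H}$.

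Next I would pick any $x_0\notin\mathcal{H}$ and rescale to obtain $e_1$ with $\operatorname{tr}(ad_{e_1})=2$ (uniquely possible since the trace is a nonzero real number). Choosing any basis $\{e_2,e_3\}$ of $\mathcal{H}$ and writing $D$ for the matrix of $ad_{e_1}|_{\mathcal{H}}$ in this basis, the trace constraint forces the $(2,2)$-entry to be $2-\alpha$, and the bracket relations of the lemma follow immediately. For invariance of $\det D$, observe that $e_1$ is determined modulo $\mathcal{H}$ by the trace normalization, and since $\mathcal{H}$ is abelian, $ad_{e_1+h}|_{\mathcal{H}}=ad_{e_1}|_{\mathcal{H}}$ for every $h\in\mathcal{H}$; a change of basis in $\mathcal{H}$ by $P\in GL(2,\mathbb{R})$ replaces $D$ by $P^{-1}DP$. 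Thus $D$ is well defined up to $GL(2,\mathbb{R})$-conjugacy, and in particular $\det D$ is an isomorphism invariant.

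It remains to show that $\det D$ is a \emph{complete} invariant once $D=I$ is excluded: any two real $2\times 2$ matrices of trace $2$ with the same determinant and neither equal to $I$ are conjugate over $\mathbb{R}$. They share the characteristic polynomial $\lambda^2-2\lambda+\det D$, so if its discriminant $4(1-\det D)$ is nonzero both matrices are diagonalizable over $\mathbb{C}$ with the same eigenvalues, and a standard argument (splitting into the cases of distinct real roots and of complex-conjugate roots, where one conjugates to the real rotation-scaling normal form) gives real conjugacy. If the discriminant is zero, i.e.\ $\det D=1$, the polynomial is $(\lambda-1)^2$, so each of $D,D'$ is either $I$ or conjugate to the Jordan block $\bigl(\begin{smallmatrix}1&1\\0&1\end{smallmatrix}\bigr)$; excluding $I$ leaves only this block up to conjugacy.

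The main obstacle is precisely this degenerate case $\det D=1$: here two non-conjugate matrices ($I$ and the Jordan block) share the same characteristic polynomial, which is why the identity case must be singled out in the statement. Apart from that edge case, the proof is a clean combination of the unimodular-kernel argument and elementary real Jordan-form theory.
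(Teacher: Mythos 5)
Your proof is correct and is essentially the argument of Milnor that the paper cites for this lemma without reproducing it: the unimodular kernel $\mathcal{H}=\ker(x\mapsto\operatorname{tr}(ad_x))$ is a $2$-dimensional abelian ideal, $e_1$ is normalized by $\operatorname{tr}(ad_{e_1})=2$ and is then well defined modulo $\mathcal{H}$, and $D$ is determined up to $GL(2,\mathbb{R})$-conjugacy, for which trace and determinant form a complete invariant once the identity is excluded. The only point worth making explicit is that in your abelian-ness step the relevant trace is that of $ad_u$ on all of $\mathcal{G}$, not just on $\mathcal{H}$; since $u$ lies in the ideal $\mathcal{H}$ we have $ad_u(\mathcal{G})\subseteq\mathcal{H}$, so the full trace equals the trace of the restriction and your computation $\operatorname{tr}(ad_u)=1\neq 0$ stands.
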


According to this result, we can, by simple computations, find that there
are five possibilities for $D:$%
\begin{eqnarray*}
D &\cong &\left( 
\begin{array}{ll}
0 & 0 \\ 
0 & 1%
\end{array}
\right) ,\qquad D\cong \left( 
\begin{array}{ll}
1 & 0 \\ 
0 & 1%
\end{array}
\right) ,\qquad D\cong \left( 
\begin{array}{ll}
1 & 0 \\ 
1 & 1%
\end{array}
\right) \\
D &\cong &\left( 
\begin{array}{ll}
1 & 0 \\ 
0 & \mu%
\end{array}
\right) ,\text{ where }0<\left| \mu \right| <1\text{ or }D\cong \left( 
\begin{array}{ll}
1 & -\zeta \\ 
\zeta & 1%
\end{array}
\right) \text{ where }\zeta >0
\end{eqnarray*}

This implies that any solvable non-unimodular Lie algebra of dimension 3 is
isomorphic to one and only one of the following Lie algebras

\begin{equation*}
\begin{tabular}{l}
$\mathcal{G}_{3,1}$:$\quad \left[ e_{1},e_{2}\right] =e_{2}$ \\ 
$\mathcal{G}_{3,2}$:$\quad \left[ e_{1},e_{2}\right] =e_{2},$ $\left[
e_{1},e_{3}\right] =e_{3}$ \\ 
$\mathcal{G}_{3,3}$:$\quad \left[ e_{1},e_{2}\right] =e_{2}+e_{3},$ $\left[
e_{1},e_{3}\right] =e_{3}$ \\ 
$\mathcal{G}_{3,4}^{\mu }$:$\quad \left[ e_{1},e_{2}\right] =e_{2},$ $\left[
e_{1},e_{3}\right] =\mu e_{3},$ $0<\left| \mu \right| <1$ \\ 
$\mathcal{G}_{3,5}^{\zeta }$:$\quad \left[ e_{1},e_{2}\right] =e_{2}+\zeta
e_{3},$ $\left[ e_{1},e_{3}\right] =-\zeta e_{2}+e_{3},$ $\zeta >0$%
\end{tabular}%
\end{equation*}

Now let $\mathcal{G}$ be a real solvable non-unimodular Lie algebra of
dimension 3. Let $A_{3}$ be a complete left-symmetric algebra whose
associated Lie algebra is $\mathcal{G}$.

We shall first recall the following result from \cite{kong-bai-meng}.

\begin{lemma}
\label{complexification}Only the complex simple left-symmetric algebras and
even-dimensional complex semisimple left-symmetric algebras may have simple
real forms, where a real form of a complex left-symmetric algebra $A$ is a
subalgebra $A_{0}$ of $A^{\mathbb{R}}$ such that $A_{0}^{\mathbb{C}}=A.$
Here $A^{\mathbb{R}}$ is $A$ regarded as a real left-symmetric algebra.
\end{lemma}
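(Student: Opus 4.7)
The plan is to analyze the antilinear involution $\sigma : A \to A$ whose fixed-point set equals $A_0$. Since $A_0$ is a real left-symmetric subalgebra of $A^{\mathbb{R}}$ and the multiplication of $A$ is $\mathbb{C}$-bilinear, $\sigma$ is an $\mathbb{R}$-algebra automorphism of $A^{\mathbb{R}}$ satisfying $\sigma(iz) = -i\sigma(z)$. If $A$ is already simple as a complex left-symmetric algebra, there is nothing to prove, so I would assume that $A$ admits a proper two-sided complex ideal $I$ and try to force $A$ to be semisimple of even complex dimension.

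For such an $I$, the subspace $\sigma(I)$ is again a complex two-sided ideal of $A$: it is closed under multiplication because $\sigma$ preserves products, and it is a complex subspace because $i\,\sigma(I) = \sigma(-i I) = \sigma(I)$. The ideals $I \cap \sigma(I)$ and $I + \sigma(I)$ are $\sigma$-invariant, hence each is the complexification of its intersection with $A_0$. These intersections are real two-sided ideals of $A_0$, and the simplicity of $A_0$ forces them to be $0$ or $A_0$. Since $I$ is proper and nonzero, the only compatible outcome is $I \cap \sigma(I) = 0$ and $I + \sigma(I) = A$, so $A = I \oplus \sigma(I)$ as a complex vector space.

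I would then upgrade this to a direct-product decomposition of left-symmetric algebras. Because both $I$ and $\sigma(I)$ are two-sided, the products $I \cdot \sigma(I)$ and $\sigma(I) \cdot I$ lie in $I \cap \sigma(I) = 0$. Hence $A \cong I \oplus \sigma(I)$ as a direct sum of left-symmetric algebras, and in particular $\dim_{\mathbb{C}} A = 2\,\dim_{\mathbb{C}} I$ is even. It remains to show that $I$ (and by symmetry $\sigma(I)$) is simple as a complex left-symmetric algebra. The $\mathbb{R}$-linear map $\phi : I \to A_0$, $\phi(x) = x + \sigma(x)$, is a bijection by the decomposition above, and using $I\cdot \sigma(I) = \sigma(I)\cdot I = 0$ and $\sigma(xy) = \sigma(x)\sigma(y)$ one verifies $\phi(x)\cdot \phi(y) = xy + \sigma(xy) = \phi(xy)$, so $\phi$ is an isomorphism of real left-symmetric algebras $I^{\mathbb{R}} \cong A_0$. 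Any complex two-sided ideal of $I$ is in particular a real two-sided ideal, so maps under $\phi$ to an ideal of $A_0$ and is therefore trivial.

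The step I expect to require the most care is the passage from the vector-space decomposition $A = I \oplus \sigma(I)$ to a decomposition as left-symmetric algebras. For Lie or associative algebras one could invoke standard structure theory, but in a left-symmetric algebra an ideal of an ideal need not be an ideal; the argument above therefore has to rely throughout on \emph{two-sided} ideals and on the mutual annihilation $I \cdot \sigma(I) = \sigma(I) \cdot I = 0$, which is the step where the peculiarities of the left-symmetric axiom are absorbed into the proof. Once this is in hand, the conclusion — that $A$ is either simple or a direct sum of two simple complex left-symmetric algebras of equal complex dimension — is immediate.
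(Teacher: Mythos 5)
Your argument is correct, but note that the paper itself offers no proof of this lemma: it is stated verbatim as a quotation from the reference of Kong--Bai--Meng on real simple left-symmetric algebras, so there is no in-paper proof to compare against. What you have written is essentially the standard conjugation argument that underlies that reference, transplanted from the Lie-algebra setting: the antilinear involution $\sigma$ with fixed set $A_{0}$ is multiplicative because the product of $A=A_{0}\oplus iA_{0}$ is the $\mathbb{C}$-bilinear extension of the product of $A_{0}$; any $\sigma$-stable complex two-sided ideal is the complexification of a two-sided ideal of $A_{0}$ and hence trivial; and for a nontrivial proper ideal $I$ this forces $I\cap\sigma(I)=0$, $I+\sigma(I)=A$. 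Your emphasis on working only with \emph{two-sided} ideals is exactly right: the inclusion $I\cdot\sigma(I)\subseteq I\cap\sigma(I)=0$ (and symmetrically) uses only two-sidedness and no structure theory, which is what makes the decomposition $A\cong I\oplus\sigma(I)$ into a direct sum of left-symmetric algebras legitimate, and the map $x\mapsto x+\sigma(x)$ then identifies $I^{\mathbb{R}}$ with $A_{0}$ and transports simplicity. Two small points worth a sentence in a written-up version: first, under the paper's definition of ``simple'' (no proper two-sided ideal \emph{and} not the one-dimensional zero algebra), you should observe that $I$ cannot be the one-dimensional zero algebra, since then $A_{0}\cong I^{\mathbb{R}}$ would be the two-dimensional zero algebra, which is not simple; second, the case where $A$ itself is the one-dimensional complex zero algebra is likewise excluded because its real forms are one-dimensional zero algebras. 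With those remarks the proof is complete and self-contained.
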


Now, we can prove the following

\begin{proposition}
\label{p1}$A_{3}$ is not simple. In other words, any complete left-symmetric
structure on a solvable non-unimodular Lie algebra of dimension 3 is not
simple.
\end{proposition}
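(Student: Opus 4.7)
The plan is to argue by contradiction: assume that the complete real left-symmetric algebra $A_3$ with solvable non-unimodular associated Lie algebra $\mathcal{G}$ is simple, and then use the interplay between Lemma \ref{complexification} and the known classification of three-dimensional complex simple left-symmetric algebras from \cite{burde} to reach a contradiction about the structure of $\mathcal{G}$.

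\noindent\textbf{Step 1: Complexify.} First, I would form the complexification $A_3^{\mathbb{C}} = A_3 \otimes_{\mathbb{R}} \mathbb{C}$, which is a complex left-symmetric algebra of complex dimension $3$ whose associated Lie algebra is $\mathcal{G}^{\mathbb{C}}$. Since $A_3$ is complete, each right multiplication $R_x$ on $A_3$ is nilpotent; the complex-linear extension of $R_x$ to $A_3^{\mathbb{C}}$ is again nilpotent, so $A_3^{\mathbb{C}}$ is a complete complex left-symmetric algebra. Moreover, $A_3$ is by hypothesis a simple real form of $A_3^{\mathbb{C}}$ in the sense of Lemma \ref{complexification}. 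That lemma then forces $A_3^{\mathbb{C}}$ to be either a simple complex left-symmetric algebra or an even-dimensional semisimple complex one; since $\dim_{\mathbb{C}} A_3^{\mathbb{C}} = 3$ is odd, the second alternative is impossible, and therefore $A_3^{\mathbb{C}}$ must itself be a simple complex left-symmetric algebra of dimension $3$.

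\noindent\textbf{Step 2: Apply Burde's classification.} Next I would invoke the classification of three-dimensional complex simple left-symmetric algebras in \cite{burde}. For each algebra on that list I would read off the associated complex Lie algebra and also check completeness; only the complete items can correspond to our $A_3^{\mathbb{C}}$. For each such complete simple complex left-symmetric algebra, I would examine whether the underlying complex Lie algebra admits a real form isomorphic to one of the five solvable non-unimodular Lie algebras $\mathcal{G}_{3,1}$, $\mathcal{G}_{3,2}$, $\mathcal{G}_{3,3}$, $\mathcal{G}_{3,4}^{\mu}$, $\mathcal{G}_{3,5}^{\zeta}$ listed just before the proposition. The expected outcome is that none of the complete three-dimensional complex simple left-symmetric algebras has an associated complex Lie algebra with a solvable non-unimodular real form (they will turn out to have associated Lie algebras that are either simple, nilpotent, or solvable unimodular), which contradicts the fact that $\mathcal{G}^{\mathbb{C}}$ is the complexification of the solvable non-unimodular Lie algebra $\mathcal{G}$.

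\noindent\textbf{Main obstacle.} The conceptual structure of the argument is straightforward; the genuine work is the table-by-table check in Step~2. The main obstacle is to be careful about two distinct refinements simultaneously: filtering Burde's list by the completeness hypothesis (since not every simple complex left-symmetric algebra need be complete), and then, for each survivor, verifying that no real form of the associated complex Lie algebra lies in the non-unimodular solvable family described by Lemma \ref{milnorj}. Unimodularity can be read off from the traces of the adjoint operators $ad_{e_1}$, which are either zero or can be rescaled to zero for all complex Lie algebras in Burde's list, so the contradiction should emerge cleanly from the trace condition $\operatorname{tr}(ad_{e_1}) = \alpha + (2-\alpha) = 2 \neq 0$ that distinguishes the non-unimodular case.
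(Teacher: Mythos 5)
Your proposal is correct and follows essentially the same route as the paper: complexify, use Lemma \ref{complexification} together with the odd dimension of $A_{3}^{\mathbb{C}}$ to force $A_{3}^{\mathbb{C}}$ to be simple, invoke Burde's classification (the paper cites Corollary 4.2 of \cite{burde}, which pins the complete case down to the single algebra $A_{1}^{-1}$, so your ``table-by-table'' check collapses to one entry), and conclude by the unimodularity of its associated Lie algebra contradicting the non-unimodularity of $\mathcal{G}$. The only blemish is the phrase about traces being ``rescaled to zero,'' which is not meaningful; the correct (and intended) point is simply that $\mathrm{tr}(ad_{e_{1}})=0$ for $A_{1}^{-1}$ while $\mathrm{tr}(ad_{e_{1}})=2\neq 0$ for $\mathcal{G}$.
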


\proof%
Assume to the contrary that $A_{3}$ is simple. Then, Lemma \ref%
{complexification} shows that the complexification $A_{3}^{\mathbb{C}}$ of $%
A_{3}$ is simple as the dimension of $A_{3}^{\mathbb{C}}$ is odd. We can now
apply Corollary 4.2 in \cite{burde} to deduce that $A_{3}^{\mathbb{C}}$ is
isomorphic to the complex left-symmetric algebra $A_{1}^{-1}$ having a basis 
$\left\{ e_{1},e_{2},e_{3}\right\} $ such that the only non-trivial products
are

\begin{eqnarray*}
e_{1}\cdot e_{2} &=&e_{2}, \\
e_{1}\cdot e_{3} &=&-e_{3}, \\
e_{2}\cdot e_{3} &=&e_{3}\cdot e_{2}=e_{1}.
\end{eqnarray*}

Thus, the complex Lie algebra $\mathcal{G}_{3}$ associated to $A_{3}^{%
\mathbb{C}}\cong A_{1}^{-1}$ is unimodular and hence $\mathcal{G}$ must be
unimodular. This contradiction showsshows that $A_{3}$ is not simple%
\endproof%

\bigskip

Before returning to the left-symmetric algebra $A_{3},$ we need to state the
following facts without proofs.

\begin{lemma}
\label{ideal}Let $A$ be a left-symmetric algebra with associated Lie algebra 
$\mathcal{G}$, and $R$ a two-sided ideal in $A.$ Then, the Lie algebra $%
\mathcal{R}$ associated to $R$ is an ideal in $\mathcal{G}$.
\end{lemma}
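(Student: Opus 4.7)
The plan is to unpack the definitions and observe that the result falls out in one line. Recall from equation (\ref{lie bracket}) that the Lie bracket on $\mathcal{G}$ associated to the left-symmetric product is given by $[x,y] = x\cdot y - y\cdot x$. To show that $\mathcal{R}$ is an ideal in $\mathcal{G}$, I need to verify that $[x,r]\in\mathcal{R}$ for every $x\in\mathcal{G}$ and every $r\in\mathcal{R}$; that is, for every $x\in A$ and $r\in R$, the element $[x,r]$ lies in $R$ (since $\mathcal{G}$ and $A$ coincide as vector spaces, and likewise for $\mathcal{R}$ and $R$).

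First, I would note that the assumption that $R$ is a two-sided ideal in $A$ means precisely that $x\cdot r\in R$ and $r\cdot x\in R$ for all $x\in A$ and all $r\in R$. Then I would simply write
\[
[x,r] \;=\; x\cdot r - r\cdot x,
\]
observe that both summands belong to $R$ by the two-sided ideal property, and conclude that $[x,r]\in R$ since $R$ is a vector subspace.

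There is no real obstacle here, since the statement is essentially a tautology: the Lie bracket is a difference of two products, and a two-sided ideal is by definition closed under both left and right multiplication. The only thing worth being careful about is the (trivial) distinction between $A$ as a left-symmetric algebra and $\mathcal{G}$ as its associated Lie algebra, and likewise for $R$ and $\mathcal{R}$; once this identification is made explicit, the proof is a single computation.
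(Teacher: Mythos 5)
Your proof is correct. The paper states this lemma explicitly without proof (``we need to state the following facts without proofs''), and your one-line argument --- $[x,r]=x\cdot r-r\cdot x\in R$ since $R$ is closed under both left and right multiplication by elements of $A$ and is a subspace --- is precisely the intended justification.
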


\begin{lemma}
\label{ideal 2}Let $\mathcal{G}$ be a solvable non-unimodular Lie algebra of
dimension 3 and let $\mathcal{I}$ be a proper ideal of $\mathcal{G}$. Then, $%
\mathcal{I}$ is isomorphic to $\mathbb{R}$, $\mathbb{R}^{2},$ or aff$\left( 
\mathbb{R}\right) =\left\langle e_{1},e_{2}:\left[ e_{1},e_{2}\right]
=e_{2}\right\rangle .$
\end{lemma}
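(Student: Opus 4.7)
The plan is to reduce the statement to the classification of Lie algebras of dimension $\leq 2$ over $\mathbb{R}$. Since $\dim \mathcal{G}=3$ and $\mathcal{I}$ is a proper ideal, we clearly have $\dim \mathcal{I}\in\{1,2\}$ (the zero ideal being excluded as it does not match any of the three listed isomorphism types). So the argument splits into two dimension cases.

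First I would handle $\dim \mathcal{I}=1$: any one-dimensional Lie algebra has trivial bracket, hence is isomorphic to the abelian Lie algebra $\mathbb{R}$. Nothing further is required here.

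Next I would handle $\dim \mathcal{I}=2$. Here $\mathcal{I}$, with the induced bracket, is a two-dimensional real Lie algebra, and I would invoke the standard classification: such a Lie algebra is either abelian, in which case $\mathcal{I}\cong \mathbb{R}^{2}$, or non-abelian, in which case $\mathcal{I}\cong \mathrm{aff}(\mathbb{R})$. To make the argument self-contained I would briefly justify the non-abelian case: if $[\mathcal{I},\mathcal{I}]\neq 0$ then, since $\mathcal{I}$ is two-dimensional, $[\mathcal{I},\mathcal{I}]$ must be one-dimensional, spanned by some $f\neq 0$. Picking any $e\in \mathcal{I}\setminus \mathbb{R}f$, one has $[e,f]=\lambda f$ for some $\lambda \neq 0$ (non-zero since otherwise $\mathcal{I}$ would be abelian), and rescaling $e$ by $\lambda^{-1}$ gives $[e,f]=f$, so $\mathcal{I}\cong \mathrm{aff}(\mathbb{R})$.

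There is no real obstacle: the solvability and non-unimodularity of $\mathcal{G}$ are not used in the argument (ideals in solvable Lie algebras are automatically solvable, and every Lie algebra of dimension $\leq 2$ is solvable anyway), so the lemma is purely a consequence of the low-dimensional classification applied to the subalgebra $\mathcal{I}$. The only mild care needed is to note that the three listed isomorphism types exhaust the possibilities in dimensions $1$ and $2$, which is what the above case analysis establishes.
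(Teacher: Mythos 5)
Your proof is correct. The paper states this lemma explicitly without proof (it is introduced with ``we need to state the following facts without proofs''), so there is no argument in the paper to compare against; your reduction to the classification of real Lie algebras of dimension at most two --- with the explicit verification that a non-abelian two-dimensional Lie algebra is isomorphic to aff$\left(\mathbb{R}\right)$ --- is the standard and surely intended justification. Your observation that solvability and non-unimodularity of $\mathcal{G}$ play no role is accurate, and your handling of the degenerate zero-ideal case is consistent with the paper's convention that ``proper ideal'' means a nonzero ideal distinct from the whole algebra (otherwise no algebra would be simple in the sense defined in Section 2).
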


By Proposition \ref{p1}, $A_{3}$ is not simple and hence it has a proper
two-sided ideal $I,$ so we get a short exact sequence of left-symmetric
algebras 
\begin{equation}
0\rightarrow I\overset{i}{\rightarrow }A_{3}\overset{\pi }{\rightarrow }%
J\rightarrow 0  \label{(1)}
\end{equation}

If $\mathcal{I}$ is the Lie subalgebra associated to $I$ then, by Lemma \ref%
{ideal}, $\mathcal{I}$ is an ideal in $\mathcal{G}$. From Lemma \ref{ideal 2}
it follows that there are three cases to be considered according to whether $%
\mathcal{I}$ is isomorphic to $\mathbb{R}$, $\mathbb{R}^{2},$ or aff$\left( 
\mathbb{R}\right) .$

\begin{itemize}
\item Case 1. $\mathcal{I}\cong \mathbb{R}$.
\end{itemize}

In this case, the short exact sequence (\ref{(1)}) becomes 
\begin{equation*}
0\rightarrow \mathbb{R}_{0}\rightarrow A_{3}\rightarrow I_{2}\rightarrow 0
\end{equation*}
where $I_{2}$ is a complete left-symmetric algebra of dimension 2 and $%
\mathbb{R}_{0}$ is $\mathbb{R}$ with the trivial product.

At the Lie algebra level, we have a short exact sequence of Lie algebras of
the form 
\begin{equation}
0\rightarrow \mathbb{R}\rightarrow \overset{\sim }{\mathcal{G}}\rightarrow 
\mathcal{H}_{2}\rightarrow 0  \label{(2)}
\end{equation}
where $\mathcal{H}_{2}$ denotes the associated Lie algebra of $I_{2}$ and $%
\overset{\sim }{\mathcal{G}}$ is an extension of $\mathcal{H}_{2}$ by $%
\mathbb{R}$.

Since $\mathcal{H}_{2}$ is of dimension 2, then $\mathcal{H}_{2}$ is either
isomorphic to $\mathbb{R}^{2}$ or aff$\left( \mathbb{R}\right) .$\medskip

Assume first that $\mathcal{H}_{2}$\textbf{\ }$\cong \mathbb{R}^{2}.$ Then,
the short exact sequence (\ref{(2)}) becomes 
\begin{equation*}
0\rightarrow \mathbb{R}\rightarrow \overset{\sim }{\mathcal{G}}\rightarrow 
\mathbb{R}^{2}\rightarrow 0
\end{equation*}

Let $\left\{ e_{1},e_{2}\right\} $ be a basis for $\mathbb{R}^{2}.$ On $%
\mathbb{R}^{2}\mathbb{\times R}$, the extended Lie bracket given by (\ref%
{ext Lie bracket}) takes the simplified form 
\begin{equation}
\left[ \left( x,a\right) ,\left( y,b\right) \right] =\left( 0,\phi \left(
x\right) b-\phi \left( y\right) a+\omega \left( x,y\right) \right) ,
\label{(3)}
\end{equation}
for all $a,b\in \mathbb{R}$, $x,y\in \mathbb{R}^{2}$.

Setting $\overset{\sim }{e}_{i}=\left( e_{i},0\right) $, $i=1,2$ and $%
\overset{\sim }{e}_{3}=\left( 0,1\right) $ we get 
\begin{eqnarray*}
\left[ \widetilde{e}_{1},\widetilde{e}_{2}\right] &=&\omega \left(
e_{1},e_{2}\right) \widetilde{e}_{3} \\
\left[ \widetilde{e}_{1},\widetilde{e}_{3}\right] &=&\phi \left(
e_{1}\right) \widetilde{e}_{3} \\
\left[ \widetilde{e}_{2},\widetilde{e}_{3}\right] &=&\phi \left(
e_{2}\right) \widetilde{e}_{3}
\end{eqnarray*}

Since $\mathcal{G}$ is solvable and non-unimodular, we can, without loss of
generality, assume that $\phi \left( e_{2}\right) =0.$ That is 
\begin{equation*}
D=\left( 
\begin{array}{ll}
0 & \omega \left( e_{1},e_{2}\right) \\ 
0 & \phi \left( e_{1}\right)%
\end{array}
\right)
\end{equation*}

Notice that $\phi \left( e_{1}\right) $ should be non-zero, since otherwise $%
\mathcal{G}$ becomes unimodular. In other words, 
\begin{equation*}
D\cong \left( 
\begin{array}{ll}
0 & 0 \\ 
0 & 1%
\end{array}
\right)
\end{equation*}

Now, we shall determine all the complete left-symmetric structures on $%
\mathbb{R}^{2}$. These are described by the following lemma that we state
without proof.

\begin{lemma}
\label{r2}Up to left-symmetric isomorphism, there are two complete
left-symmetric structures on $\mathbb{R}^{2}$ given, in a basis $\left\{
e_{1},e_{2}\right\} $ of $\mathbb{R}^{2},$ by either

\begin{enumerate}
\item[(i)] $e_{i}\cdot e_{j}=0,$ $i,j=1,2$

\item[(ii)] $e_{2}\cdot e_{2}=e_{1}.$
\end{enumerate}
\end{lemma}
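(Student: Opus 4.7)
The plan is to reduce the lemma to classifying $2$-dimensional commutative associative $\mathbb{R}$-algebras in which every left multiplication is nilpotent, and then to exhibit the normal forms by an explicit change of basis. Since the Lie algebra of $\mathbb{R}^2$ is abelian, compatibility via (\ref{lie bracket}) forces $x \cdot y = y \cdot x$. Substituting commutativity into the left-symmetric identity (\ref{l.s. def}) yields $x \cdot (y \cdot z) = y \cdot (x \cdot z)$; applying this identity after interchanging the roles of $x$ and $z$ and then using commutativity twice gives associativity $(x \cdot y) \cdot z = x \cdot (y \cdot z)$. Also $L_x = R_x$ under commutativity, so completeness becomes the condition that every $L_x$ is nilpotent, and on a $2$-dimensional space Cayley--Hamilton then forces $L_x^2 = 0$, i.e., $x \cdot (x \cdot y) = 0$ for all $x, y$.

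I would then split into two cases. If the product is identically zero we obtain case (i) directly. Otherwise, the polarization
\[
(xe_1 + ye_2) \cdot (xe_1 + ye_2) = x^2 (e_1 \cdot e_1) + 2xy (e_1 \cdot e_2) + y^2 (e_2 \cdot e_2)
\]
shows that some $u \in \mathbb{R}^2$ satisfies $u \cdot u \neq 0$, because otherwise the three coefficients on the right would all vanish, forcing the product to be trivial. Set $f_1 = u \cdot u$ and $f_2 = u$. The vectors $f_1, f_2$ are linearly independent: $f_1 = \mu f_2$ with $\mu \neq 0$ would make $\mu$ a nonzero eigenvalue of the nilpotent operator $L_{f_2}$. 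In this basis, using associativity and the identity $L_u^2 = 0$, one computes $f_2 \cdot f_2 = f_1$, $f_1 \cdot f_2 = (u \cdot u) \cdot u = u \cdot (u \cdot u) = 0$, and $f_1 \cdot f_1 = u \cdot (u \cdot f_1) = 0$; after relabeling $e_1 := f_1$, $e_2 := f_2$ this is exactly form (ii).

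To finish, one checks that (i) and (ii) are genuinely non-isomorphic because the derived subspace $A \cdot A$ is an invariant of a left-symmetric algebra, trivial in case (i) and one-dimensional in case (ii). The argument is essentially routine once the associativity reduction has been made; the only steps requiring attention are the polarization producing $u$ with $u \cdot u \neq 0$ and the verification that the remaining products vanish in the new basis. Neither presents a real obstacle over $\mathbb{R}$, so no genuinely hard step arises and the classification drops out immediately.
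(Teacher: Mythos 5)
Your proof is correct. The paper states Lemma \ref{r2} explicitly without proof, so there is no argument of the authors' to compare against; your reduction fills that gap cleanly. The chain of reasoning checks out at every step: compatibility with the abelian Lie algebra forces commutativity; plugging commutativity into the left-symmetric identity gives $L_xL_y=L_yL_x$, and together with commutativity this yields associativity; completeness plus $L_x=R_x$ and Cayley--Hamilton in dimension $2$ gives $L_x^2=0$; the polarization argument correctly produces $u$ with $u\cdot u\neq 0$ in the nontrivial case; the linear independence of $u$ and $u\cdot u$ follows from nilpotency of $L_u$; and the remaining products vanish by $L_u^2=0$ and associativity. The invariance of $A\cdot A$ separates the two normal forms. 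One could shorten the middle of the argument slightly by noting that the whole product is determined by the single symmetric bilinear map $(x,y)\mapsto x\cdot y$ with values in the common kernel forced by nilpotency, but your version is complete as written.
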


From now on, $A_{2}$ will denote the vector space $\mathbb{R}^{2}$ endowed
with one of the complete left-symmetric structures described in Lemma \ref%
{r2}.

The extended left-symmetric product on $A_{2}\mathbb{\times R}_{0}$ given by
(\ref{product}) turns out to take the simplified form$\allowbreak $%
\begin{equation}
\left( x,a\right) \cdot \allowbreak \left( y,b\right) =\left( x\cdot
y,b\lambda _{x}+a\rho _{y}+g\left( x,y\right) \right) ,  \label{(4)}
\end{equation}
for all $x,y\in A_{2}$ and $a,b\in \mathbb{R}$. Indeed, $\rho _{x},\lambda
_{x}\in End\left( \mathbb{R}\right) \cong \mathbb{R}$ for all $x\in A_{2}$.
So, we can identify $\rho _{x}$ and $\lambda _{x}$ with real numbers that we
denote by $\rho _{x}$ and $\lambda _{x},$ respectively.

Note here that $\lambda _{x}=\phi \left( x\right) +\rho _{x},$ for all $x\in 
\mathbb{R}^{2}$where $\phi :\mathbb{R}^{2}\rightarrow End\left( \mathbb{R}%
\right) \cong \mathbb{R}$ as in (\ref{(3)}).

The conditions in Theorem \ref{kim} can be simplified to the following
conditions

\begin{center}
\begin{equation}
\rho _{\left( x\cdot y\right) }=\rho _{y}\circ \rho _{x}  \label{id1}
\end{equation}

\begin{equation}
\begin{tabular}{l}
$g\left( x,y\cdot z\right) -g\left( y,x\cdot z\right) +\lambda _{x}\left(
g\left( y,z\right) \right) -\lambda _{y}\left( g\left( x,z\right) \right) $
\\ 
$\quad \quad \quad \quad \quad \quad -\rho _{z}\left( g\left( x,y\right)
-g\left( y,x\right) \right) =0$%
\end{tabular}
\label{id2}
\end{equation}
\end{center}

By using (\ref{(3)}) and (\ref{(4)}), we deduce from 
\begin{equation}
\left[ \left( x,a\right) ,\left( y,b\right) \right] =\left( x,a\right) \cdot
\allowbreak \left( y,b\right) -\left( y,b\right) \cdot \left( x,a\right) ,
\label{l.s cond}
\end{equation}
that 
\begin{equation*}
\omega \left( x,y\right) =g(x,y)-g(y,x)\text{ .}
\end{equation*}

Since $\omega \left( e_{1},e_{2}\right) =0,$ then $%
g(e_{1},e_{2})=g(e_{2},e_{1}).$ Since $\phi \left( e_{2}\right) =0,$ then $%
\lambda _{e_{2}}=\rho _{e_{2}}.$ Also, since $\phi \left( e_{1}\right) \neq
0,$ then $\lambda _{e_{1}}-\rho _{e_{1}}\neq 0.$ By applying identity (\ref%
{id1}) to $e_{i}\cdot e_{i},$ $i=1,2,$ we deduce that $\rho =0.$ Hence $%
\lambda _{e_{2}}=0$ and $\lambda _{e_{1}}\neq 0,$ say $\lambda
_{e_{1}}=\alpha ,$ $\alpha \in \mathbb{R}^{*}.$

In this case, the formula (\ref{cob1}) and (\ref{cob2}) become 
\begin{equation*}
\delta _{1}h\left( x,y\right) =\lambda _{x}\left( h\left( y\right) \right)
-h\left( x\cdot y\right)
\end{equation*}
and 
\begin{equation*}
\delta _{2}g\left( x,y,z\right) =g\left( x,y\cdot z\right) -g\left( y,x\cdot
z\right) +\lambda _{x}\left( g\left( y,z\right) \right) -\lambda _{y}\left(
g\left( x,z\right) \right)
\end{equation*}
where $h\in \mathcal{L}^{1}\left( A_{2},\mathbb{R}\right) $ and $g\in 
\mathcal{L}^{2}\left( A_{2},\mathbb{R}\right) .$

According to Lemma \ref{r2}, there are two cases to be considered.

\begin{enumerate}
\item $A_{2}=\left\langle e_{1},e_{2}:e_{i}\cdot
e_{j}=0,i,j=1,2\right\rangle .$

In this case, using the first formula above for $\delta _{1},$ we get 
\begin{equation*}
\delta _{1}h=\left( 
\begin{array}{cc}
h_{11} & h_{12} \\ 
0 & 0%
\end{array}
\right) ,
\end{equation*}
where $h_{11}=\alpha h\left( e_{1}\right) $ and $h_{12}=\alpha h\left(
e_{2}\right) .$ Similarly, using the second formula above for $\delta _{2},$
we verify easily that if $g$ is a cocycle (i.e. $\delta _{2}g=0$) and $%
g_{ij}=g\left( e_{i},e_{j}\right) ,$ then 
\begin{equation*}
g=\left( 
\begin{array}{cc}
g_{11} & 0 \\ 
0 & 0%
\end{array}
\right) ,
\end{equation*}
that is $g_{12}=g_{21}=g_{22}=0.$ In this case, the class $\left[ g\right]
\in H_{\lambda ,\rho }^{2}\left( A_{2},\mathbb{R}\right) $ of a cocycle $g$
may be represented, in the basis above, by a matrix of the simplified form 
\begin{equation*}
g=\left( 
\begin{array}{cc}
0 & s \\ 
0 & 0%
\end{array}
\right)
\end{equation*}

We can now determine the extended complete left-symmetric structures on $%
A_{3}.$ By setting $\widetilde{e}_{i}=\left( e_{i},0\right) $, $i=1,2$ and $%
\widetilde{e}_{3}=\left( 0,1\right) $ and using formula (\ref{(4)}) we
obtain that the non-zero relations in $A_{3}$ are 
\begin{eqnarray*}
\widetilde{e}_{1}\cdot \widetilde{e}_{2}=s\widetilde{e}_{3}, \\
\widetilde{e}_{1}\cdot \widetilde{e}_{3}=\alpha \widetilde{e}_{3},
\end{eqnarray*}
with $\alpha =\lambda _{e_{1}}\neq 0$

By setting $e_{1}=\frac{1}{\alpha }\widetilde{e}_{1},$ $e_{2}=\widetilde{e}%
_{3}$ and $e_{3}=\widetilde{e}_{2},$ and $t=\frac{s}{\alpha }$ we see that
the new basis $\left\{ e_{1},e_{2},e_{3}\right\} $ of $A_{3}$ satisfies 
\begin{eqnarray*}
e_{1}\cdot e_{2}=e_{2} \\
e_{1}\cdot e_{3}=te_{2}
\end{eqnarray*}
and all other products are zero. We can easily see that this product is
isomorphic to 
\begin{equation*}
e_{1}\cdot e_{2}=e_{2}.
\end{equation*}

We set $N_{3,0}=\left\langle e_{1},e_{2},e_{3}:e_{1}\cdot
e_{2}=e_{2}\right\rangle .$

\item $A_{2}=\left\langle e_{1},e_{2}:e_{2}\cdot e_{2}=e_{1}\right\rangle .$

We obtain, as above, that $A_{3}$ is isomorphic to one of the following
complete left-symmetric algebras

\begin{description}
\item[(i)] $N_{3,2}=\left\langle e_{1},e_{2},e_{3}:e_{1}\cdot
e_{2}=e_{2},e_{3}\cdot e_{3}=e_{1}\right\rangle ,$

\item[(ii)] $N_{3,3}=\left\langle e_{1},e_{2},e_{3}:e_{1}\cdot
e_{2}=e_{2},e_{3}\cdot e_{3}=-e_{1}\right\rangle .$
\end{description}
\end{enumerate}

Assume now that $\mathcal{H}_{2}$ $\cong $aff$\left( \mathbb{R}\right) .$
Then the extended Lie bracket on aff$\left( \mathbb{R}\right) \times \mathbb{%
R}$ given by (\ref{ext Lie bracket}) takes the form 
\begin{equation*}
\left[ \left( x,a\right) ,\left( y,b\right) \right] =\left( \left[ x,y\right]
,\phi \left( x\right) b-\phi \left( y\right) a+\omega \left( x,y\right)
\right) ,
\end{equation*}
for all $a,b\in \mathbb{R}$, $x,y\in $aff$\left( \mathbb{R}\right) $.

Let $\left\{ e_{1},e_{2}\right\} $ be a basis of aff$\left( \mathbb{R}%
\right) $ satisfying $\left[ e_{1},e_{2}\right] =e_{2}$. By setting $\overset%
{\sim }{e}_{i}=\left( e_{i},0\right) $, $i=1,2$ and $\overset{\sim }{e}%
_{3}=\left( 0,1\right) $ we get 
\begin{eqnarray*}
\left[ \widetilde{e}_{1},\widetilde{e}_{2}\right] &=&\widetilde{e}%
_{2}+\omega \left( e_{1},e_{2}\right) \widetilde{e}_{3} \\
\left[ \widetilde{e}_{1},\widetilde{e}_{3}\right] &=&\phi \left(
e_{1}\right) \widetilde{e}_{3} \\
\left[ \widetilde{e}_{2},\widetilde{e}_{3}\right] &=&\phi \left(
e_{2}\right) \widetilde{e}_{3}.
\end{eqnarray*}

Since $\mathcal{G}$ is solvable and non-unimodular, then as above, we can
assume that $\phi \left( e_{2}\right) =0.$ That is, 
\begin{equation*}
D=\left( 
\begin{array}{ll}
1 & \omega \left( e_{1},e_{2}\right) \\ 
0 & \phi \left( e_{1}\right)%
\end{array}
\right)
\end{equation*}

Notice that $\phi \left( e_{1}\right) +1\neq 0,$ since otherwise $\mathcal{G}
$ becomes unimodular. Now, we have the following cases.

\begin{enumerate}
\item If $\det D=0,$ then $D\cong \left( 
\begin{array}{ll}
1 & 0 \\ 
0 & 0%
\end{array}
\right) $ that is, $\phi \left( e_{1}\right) =0$ and $\omega \left(
e_{1},e_{2}\right) =0$. This means that $\phi \,$is identically zero, i.e., $%
\overset{\sim }{\mathcal{G}}$ is a central extension of aff$\left( \mathbb{R}%
\right) $by $\mathbb{R}$.

\item If $\det D\neq 0,$ then $D\cong \left( 
\begin{array}{ll}
1 & 0 \\ 
0 & 1%
\end{array}
\right) ,\left( 
\begin{array}{ll}
1 & 1 \\ 
0 & 1%
\end{array}
\right) $ or $\left( 
\begin{array}{ll}
1 & 0 \\ 
0 & \mu%
\end{array}
\right) ,$ with $0<\left| \mu \right| <1.$
\end{enumerate}

It is not hard to prove the following

\begin{lemma}
\label{g2}Up to left-symmetric isomorphisms, there is a unique complete
left-symmetric structure on aff$\left( \mathbb{R}\right) $ which is given,
relative to a basis $e_{1},e_{2}$ of aff$\left( \mathbb{R}\right) $
satisfying $\left[ e_{1},e_{2}\right] =e_{2},$ by $e_{1}\cdot e_{2}=e_{2}$.
\end{lemma}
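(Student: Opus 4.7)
The plan is to set up the most general left-symmetric product on $\mathrm{aff}(\mathbb{R})$ compatible with the bracket $[e_1,e_2]=e_2$, then whittle down the parameters using (a) completeness, (b) the left-symmetric identity, and finally (c) the automorphism group of $\mathrm{aff}(\mathbb{R})$.

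Concretely, write
\[
e_1\cdot e_1=a\,e_1+b\,e_2,\qquad e_1\cdot e_2=c\,e_1+d\,e_2,\qquad e_2\cdot e_2=e\,e_1+f\,e_2.
\]
Bracket compatibility forces $e_2\cdot e_1=c\,e_1+(d-1)e_2$. Next I would impose completeness: since the algebra is two-dimensional, nilpotency of $R_{e_1}$ and $R_{e_2}$ is equivalent to each having zero trace and zero determinant, yielding
\[
d=1-a,\qquad f=-c,\qquad bc=-a^2,\qquad e(1-a)=-c^2.
\]
Then I would impose the left-symmetric identity in the compact form $[L_{e_1},L_{e_2}]=L_{e_2}$; this is the step that produces real new information, and a short $2\times 2$ matrix calculation shows it reduces (after substituting the completeness relations) to the single extra equation
\[
eb=-c(1+a).
\]

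The heart of the argument is the case split on $a$. If $a\neq 0$, then $bc=-a^2\neq 0$ forces $b,c\neq 0$, and $a\neq 1$ (otherwise $c^2=0$); solving for $e$ and $b$ and substituting into $eb=-c(1+a)$ collapses to the contradiction $a^2=a^2-1$. Hence $a=0$. Then $bc=0$ combined with $eb=-c$ forces $c=0$ (if $c\neq 0$, then $b=0$ and the last equation gives $c=0$), and consequently $e=f=0$, $d=1$, leaving only $b$ free. The product is therefore
\[
e_1\cdot e_1=b\,e_2,\qquad e_1\cdot e_2=e_2,
\]
with all other products zero.

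Finally I would eliminate the parameter $b$ by a change of basis: set $e_1'=e_1-b\,e_2$ (which preserves the Lie bracket since $[e_1',e_2]=[e_1,e_2]=e_2$). Direct expansion gives $e_1'\cdot e_1'=(b-b)e_2=0$ and $e_1'\cdot e_2=e_2$, producing exactly the claimed normal form. The only real obstacle is the verification that $[L_{e_1},L_{e_2}]=L_{e_2}$ reduces to a single scalar equation after completeness is imposed; everything else is linear algebra plus a one-line contradiction in the $a\neq 0$ branch.
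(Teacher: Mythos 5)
Your proposal is correct, and it is a complete argument. The paper actually states Lemma \ref{g2} without proof (``It is not hard to prove the following''), so there is nothing to compare against except the computation itself; I checked it. Writing $L_{e_1}=\left(\begin{smallmatrix}a&c\\ b&d\end{smallmatrix}\right)$, $L_{e_2}=\left(\begin{smallmatrix}c&e\\ d-1&f\end{smallmatrix}\right)$ and imposing $[L_{e_1},L_{e_2}]=L_{e_2}$, the $(1,2)$ and $(2,1)$ entries are automatically satisfied once $d=1-a$, $f=-c$, $bc=-a^2$, $e(1-a)=-c^2$ are substituted, and both diagonal entries reduce to the single scalar equation $eb=-c(1+a)$, exactly as you claim; the $a\neq 0$ branch then gives $a^2=a^2-1$, and the $a=0$ branch forces $c=e=f=0$, $d=1$, leaving $e_1\cdot e_1=be_2$, which your substitution $e_1'=e_1-be_2$ kills while preserving the bracket. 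Two small remarks. First, you impose nilpotency only of $R_{e_1}$ and $R_{e_2}$ rather than of every $R_x$; since these are necessary conditions for completeness and the resulting normal form is manifestly complete ($R_{e_1}=0$, $R_{e_2}^2=0$), this is logically harmless, but it deserves a sentence. Second, you should record explicitly that the normal form you land on is indeed a complete left-symmetric structure compatible with the bracket (the existence half of the lemma); this is a one-line verification with $L_{e_2}=0$.
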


We will denote by $N_{2}$ the vector space aff$\left( \mathbb{R}\right) $
endowed with the complete left-symmetric product given in Lemma \ref{g2}.

On the other hand, the extended left-symmetric product on $N_{2}\times 
\mathbb{R}_{0}$ is given by 
\begin{equation}
\left( x,a\right) \cdot \allowbreak \left( y,b\right) =\left( x\cdot
y,b\lambda \left( x\right) +a\rho \left( y\right) +g\left( x,y\right)
\right) ,\allowbreak  \label{l.s p}
\end{equation}
for all $a,b\in \mathbb{R}$, $x,y\in $aff$\left( \mathbb{R}\right) .$

The conditions in Theorem \ref{kim} can be simplified to the following
conditions 
\begin{equation}
\lambda _{\left[ x,y\right] }=0  \label{id3}
\end{equation}
\begin{equation}
\rho _{\left( x\cdot y\right) }=\rho _{y}\circ \rho _{x}  \label{id4}
\end{equation}
\begin{equation*}
\begin{tabular}{l}
$g\left( x,y\cdot z\right) -g\left( y,x\cdot z\right) +\lambda _{x}\left(
g\left( y,z\right) \right) -\lambda _{y}\left( g\left( x,z\right) \right)
-g\left( \left[ x,y\right] ,z\right) $ \\ 
$\quad \quad \quad \quad -\rho _{z}\left( g\left( x,y\right) -g\left(
y,x\right) \right) =0$%
\end{tabular}%
\end{equation*}

By using (\ref{(3)}) and (\ref{(4)}), we deduce from 
\begin{equation*}
\left[ \left( x,a\right) ,\left( y,b\right) \right] =\left( x,a\right) \cdot
\left( y,b\right) -\left( y,b\right) \cdot \left( x,a\right) ,
\end{equation*}
that 
\begin{equation*}
\omega \left( x,y\right) =g(x,y)-g(y,x)
\end{equation*}

From condition (\ref{id3}), we get $\lambda _{e_{2}}=0.$ Applying the
identity (\ref{id4}) above to $e_{i}\cdot e_{i},$ $i=1,2,$ we deduce that $%
\rho =0$ and hence $\lambda _{e_{1}}=\phi \left( e_{1}\right) .$

Assume first that $D\cong \left( 
\begin{array}{ll}
1 & 0 \\ 
0 & 0%
\end{array}
\right) ,$ that is, $\omega \left( e_{1},e_{2}\right) =0$ and $\phi \left(
e_{1}\right) =0,$ then $\lambda =\rho =0.$ Thus, the extension is
central.\medskip

We know that the classification of the exact central extension of $N_{2}$ by 
$\mathbb{R}_{0}$ is, up to left-symmetric isomorphism, the orbit space of $%
H_{ex}^{2}\left( N_{2},\mathbb{R}_{0}\right) $ under the natural action of $%
G=Aut(\mathbb{R}_{0})\times Aut(N_{2})$ (Proposition \ref{isomo}). So, we
must compute $H_{ex}^{2}\left( N_{2},\mathbb{R}_{0}\right) .$ Since $\mathbb{%
R}_{0}\mathbb{\,}$is a trivial $N_{2}$-bimodule, then 
\begin{eqnarray*}
\delta _{1}h\left( x,y\right) &=&-h\left( x\cdot y\right) , \\
\delta _{2}g\left( x,y,z\right) &=&g\left( x,y\cdot z\right) -g\left(
y,x\cdot z\right) -g\left( \left[ x,y\right] ,z\right) ,
\end{eqnarray*}
where $h\in \mathcal{L}^{1}\left( N_{2},\mathbb{R}\right) $ and $g\in 
\mathcal{L}^{2}\left( N_{2},\mathbb{R}\right) .$ This implies that, with
respect to the basis $e_{1},e_{2}$ of $N_{2},$ $\delta _{1}h$ is of the form 
\begin{equation*}
\delta _{1}h=\left( 
\begin{array}{cc}
0 & h_{12} \\ 
0 & 0%
\end{array}
\right) ,
\end{equation*}
where $h_{12}=-h(e_{2}).$

Observe that if $g$ is a $2-$cocycle (i.e. $\delta _{2}g=0$), then 
\begin{equation*}
g=\left( 
\begin{array}{cc}
g_{11} & 0 \\ 
0 & 0%
\end{array}
\right) ,
\end{equation*}
where $g_{ij}=g(e_{i},e_{j}).$ Hence, $\left[ g\right] \in H^{2}\left( N_{2},%
\mathbb{R}\right) $ can be represented as a matrix with respect to $%
\{e_{1},e_{2}\}$ by 
\begin{equation*}
g=\left( 
\begin{array}{cc}
t & 0 \\ 
0 & 0%
\end{array}
\right) ,t\in \mathbb{R}
\end{equation*}

We determine, in this case, the extended left-symmetric structure on $A_{3}.$
By setting $\widetilde{e}_{i}=\left( e_{i},0\right) $, $i=1,2$ and $%
\widetilde{e}_{3}=\left( 0,1\right) $, and using formula (\ref{l.s p}), we
find 
\begin{equation*}
\widetilde{e}_{1}\cdot \widetilde{e}_{1}=t\widetilde{e}_{3},\qquad 
\widetilde{e}_{1}\cdot \widetilde{e}_{2}=\widetilde{e}_{2}
\end{equation*}
and all other products are zero, $t\in \mathbb{R}$. We denote $\mathcal{G}$
endowed with this structure by $N_{3,t}.$

Recall that the extension 
\begin{equation*}
0\rightarrow \mathbb{R}_{0}\rightarrow A_{3}\rightarrow N_{2}\rightarrow 0
\end{equation*}
is exact (i.e. $i(\mathbb{R}_{0})=C(A_{2})$) if and only if $I_{[g]}=\left\{
0\right\} .$

Let $x=ae_{1}+be_{2}\in I_{[g]}.$ Then computing all the products $x\cdot
e_{i}=e_{i}\cdot x=0,$ we deduce that $x=0,$ that is the extension is exact.

Let $N_{3,t},$ $N_{3,t^{^{\prime }}}$ be two left-symmetric algebras as
above. We know that $N_{3,t}$ is isomorphic to $N_{3,t^{^{\prime }}}$ if and
only if there exists $\left( \alpha ,\eta \right) \in Aut(\mathbb{R}%
_{0})\times Aut(N_{2})=\mathbb{R}^{*}\mathbb{\times }Aut(N_{2})$ such that
for all $x,y\in N_{2},$ we have 
\begin{equation}
g^{^{\prime }}\left( x,y\right) =\alpha g\left( \eta \left( x\right) ,\eta
\left( y\right) \right) .  \label{class2}
\end{equation}

Now, we have to calculate $Aut(N_{2}).$ Let $\eta \in Aut(N_{2})$ so that,
with respect to the basis $e_{1},e_{2}$ of $N_{2}$ with $e_{1}\cdot
e_{2}=e_{2},$%
\begin{equation*}
\eta =\left( 
\begin{array}{cc}
a & b \\ 
c & d%
\end{array}
\right) .
\end{equation*}

Since $\eta (e_{2})=\eta (e_{1}\cdot e_{2})=\eta (e_{1})\cdot \eta (e_{2})$,
then $b=0$ and $d=ad.$ Also $0=\eta (e_{1}\cdot e_{1})=\eta (e_{1})\cdot
\eta (e_{1})$ which implies that $a=0$ or $c=0.$ Since $\det \eta \neq 0,$
then $d\neq 0$ and hence $a=1$ and $c=0.$ This means that 
\begin{equation*}
\eta =\left( 
\begin{array}{cc}
1 & 0 \\ 
0 & d%
\end{array}
\right) ,
\end{equation*}
with $d\neq 0.$ We shall now apply formula (\ref{class2}). For this we
recall first that in the basis $e_{1},e_{2},$ the classes $g$ and $%
g^{^{\prime }}$ corresponding to $N_{3,t}$ and $N_{3,t^{^{\prime }}}$ have,
respectively, the forms 
\begin{equation*}
g=\left( 
\begin{array}{cc}
t & 0 \\ 
0 & 0%
\end{array}
\right) \text{ and }g^{^{\prime }}=\left( 
\begin{array}{cc}
t^{^{\prime }} & 0 \\ 
0 & 0%
\end{array}
\right)
\end{equation*}
From $g^{^{\prime }}(e_{1},e_{1})=\alpha g\left( \eta \left( e_{1}\right)
,\eta \left( e_{1}\right) \right) ,$ we get 
\begin{equation*}
t^{^{\prime }}=\alpha t
\end{equation*}

Hence $N_{3,t}$ and $N_{3,t^{^{\prime }}}$ are isomorphic if and only if $%
t^{^{\prime }}=\alpha t,$ for some $\alpha \in \mathbb{R}^{*}$.

Notice that if $t=0,$ we obtain the complete left-symmetric algebra $N_{3,0}$
described above. If $t\neq 0,$ we obtain, by setting $e_{i}=\widetilde{e}%
_{i},$ $i=1,2,$ and $e_{3}=t\widetilde{e}_{3},$ the complete left-symmetric
algebra 
\begin{equation*}
N_{3,1}=\left\langle e_{1},e_{2},e_{3}:e_{1}\cdot e_{1}=e_{3},e_{1}\cdot
e_{2}=e_{2}\right\rangle
\end{equation*}

Assume now that $D\cong \left( 
\begin{array}{ll}
1 & 0 \\ 
0 & 1%
\end{array}
\right) ,$ that is, $\omega \left( e_{1},e_{2}\right) =0$ and $\phi \left(
e_{1}\right) =1.$ Then $\lambda \left( e_{1}\right) =\phi \left(
e_{1}\right) =1.$ We deduce, in this case, that, in the basis $e_{1},e_{2}$
of $N_{2}$, the class $\left[ g\right] \in H_{\lambda ,\rho }^{2}\left(
N_{2},\mathbb{R}\right) $ of a cocycle $g$ may be represented by a matrix of
the simplified form 
\begin{equation*}
g=\left( 
\begin{array}{cc}
0 & t \\ 
t & 0%
\end{array}
\right)
\end{equation*}

We determine, in this case, the extended complete left-symmetric structure
on $A_{3}.$ By setting $\widetilde{e}_{i}=\left( e_{i},0\right) $, $i=1,2$
and $\widetilde{e}_{3}=\left( 0,1\right) $ and using formula (\ref{l.s p}),
we obtain 
\begin{eqnarray*}
\widetilde{e}_{1}\cdot \widetilde{e}_{2} &=&\widetilde{e}_{2}+t\widetilde{e}%
_{3} \\
\widetilde{e}_{2}\cdot \widetilde{e}_{1} &=&t\widetilde{e}_{3} \\
\widetilde{e}_{1}\cdot \widetilde{e}_{3} &=&\widetilde{e}_{3}
\end{eqnarray*}

We denote this left-symmetric algebra by $B_{3,t}.$ Notice that if $t=0,$ we
obtain the complete left-symmetric algebra $B_{3,0}$ with the non-zero
relations 
\begin{eqnarray*}
e_{1}\cdot e_{2} &=&e_{2}, \\
e_{1}\cdot e_{3} &=&e_{3}.
\end{eqnarray*}

If $t\neq 0,$ we obtain, by setting $e_{i}=\widetilde{e}_{i},$ $i=1,2,$ and $%
e_{3}=t\widetilde{e}_{3},$ the complete left-symmetric algebra $B_{3,1}$
with the non-zero relations 
\begin{eqnarray*}
e_{1}\cdot e_{2} &=&e_{2}+e_{3} \\
e_{2}\cdot e_{1} &=&e_{3} \\
e_{1}\cdot e_{3} &=&e_{3}
\end{eqnarray*}

Assume now that $D\cong \left( 
\begin{array}{ll}
1 & 1 \\ 
0 & 1%
\end{array}
\right) $ that is, $\omega \left( e_{1},e_{2}\right) =1$ and $\phi \left(
e_{1}\right) $ $=1.$ Hence $\lambda \left( e_{1}\right) =\phi \left(
e_{1}\right) =1.$ Using the same method as above, it follows that the class $%
\left[ g\right] \in H_{\lambda ,\rho }^{2}\left( N_{2},\mathbb{R}\right) $
of a cocycle $g$ takes the reduced form 
\begin{equation*}
g=\left( 
\begin{array}{cc}
0 & t \\ 
t-1 & 0%
\end{array}
\right)
\end{equation*}

We determine, in this case, the extended complete left-symmetric structures
on $A_{3}.$ By setting $\widetilde{e}_{i}=\left( e_{i},0\right) $, $i=1,2$
and $\widetilde{e}_{3}=\left( 0,1\right) $ and using formula (\ref{l.s p}),
we obtain 
\begin{eqnarray*}
\widetilde{e}_{1}\cdot \widetilde{e}_{2} &=&\widetilde{e}_{2}+t\widetilde{e}%
_{3} \\
\widetilde{e}_{2}\cdot \widetilde{e}_{1} &=&\left( t-1\right) \widetilde{e}%
_{3} \\
\widetilde{e}_{1}\cdot \widetilde{e}_{3} &=&\widetilde{e}_{3}
\end{eqnarray*}

We denote such a left-symmetric algebra by $C_{3,t}.$ Notice that if $t=1,$
we obtain the complete left-symmetric algebra $C_{3,1}$ with the non-zero
relations 
\begin{eqnarray*}
e_{1}\cdot e_{2} &=&e_{2}+e_{3}, \\
e_{1}\cdot e_{3} &=&e_{3},
\end{eqnarray*}
and if $t\neq 1,$ we obtain the complete left-symmetric algebra $C_{3,t}$
with the non-zero relations 
\begin{eqnarray*}
e_{1}\cdot e_{2} &=&e_{2}+te_{3} \\
e_{2}\cdot e_{1} &=&\left( t-1\right) e_{3} \\
e_{1}\cdot e_{3} &=&e_{3}
\end{eqnarray*}
where different values of $t$ give non-isomorphic complete left-symmetric
algebras.

Assume finally that $D\cong \left( 
\begin{array}{ll}
1 & 0 \\ 
0 & \mu%
\end{array}
\right) ,$ with $0<\left| \mu \right| <1,$ that is $\omega \left(
e_{1},e_{2}\right) =0$ and $\phi \left( e_{1}\right) =\mu .$ Hence $\lambda
\left( e_{1}\right) =\phi \left( e_{1}\right) =\mu .$ It follows that the
class $\left[ g\right] \in H_{\lambda ,\rho }^{2}\left( N_{2},\mathbb{R}%
\right) $ of a cocycle $g$ is identically zero.

We determine, in this case, the extended complete left-symmetric structures
on $A_{3}.$ By setting $\widetilde{e}_{i}=\left( e_{i},0\right) $, $i=1,2$
and $\widetilde{e}_{3}=\left( 0,1\right) $ and using formula (\ref{l.s p}),
we obtain 
\begin{eqnarray*}
\widetilde{e}_{1}\cdot \widetilde{e}_{2} &=&\widetilde{e}_{2}. \\
\widetilde{e}_{1}\cdot \widetilde{e}_{3} &=&\mu \widetilde{e}_{3}.
\end{eqnarray*}
where $0<\left| \mu \right| <1.$ We set 
\begin{equation*}
D_{3,1}\left( \mu \right) =\left\langle e_{1},e_{2},e_{3:}e_{1}\cdot
e_{2}=e_{2},e_{1}\cdot e_{3}=\mu e_{3}\right\rangle
\end{equation*}
where $0<\left| \mu \right| <1.$

\begin{itemize}
\item Case 2. $\mathcal{I}$ $\cong $aff$\left( \mathbb{R}\right) .$
\end{itemize}

In this case, the short exact sequence (\ref{(1)}) becomes 
\begin{equation}
0\rightarrow N_{2}\rightarrow A_{3}\rightarrow \mathbb{R}_{0}\rightarrow 0
\label{ses1}
\end{equation}
where $N_{2}$ is the complete left-symmetric algebra whose associated Lie
algebra is aff$\left( \mathbb{R}\right) $ and $\mathbb{R}_{0}$ is the
trivial left-symmetric algebra over $\mathbb{R}$.

Let $\sigma :\mathbb{R}_{0}\rightarrow A_{3}$ be a section and set $\sigma
\left( 1\right) =x_{\circ }\in A_{3}$ and define two linear maps $\lambda
,\rho \in End\left( N_{2}\right) $ by putting $\lambda \left( y\right)
=x_{\circ }\cdot y$ and $\rho \left( y\right) =y\cdot x_{\circ }.$ By
setting $e=x_{\circ }\cdot x_{\circ }$, we see that $e\in N_{2}.$ Let $g:%
\mathbb{R}_{0}\mathbb{\times R}_{0}\rightarrow N_{2}$ be the bilinear map
defined by $g\left( a,b\right) =\sigma \left( a\right) \cdot \sigma \left(
b\right) -\sigma \left( a\cdot b\right) .$ Since the complete left-symmetric
structure on $\mathbb{R}$ is trivial, then $g\left( a,b\right) =abe,$ or
equivalently $g\left( 1,1\right) =e.$ Also we can show that $\delta _{2}g=0,$
i.e., $g\in Z_{\lambda ,\rho }^{2}\left( \mathbb{R}_{0},N_{2}\right) .$

In this case, the extended left-symmetric product on $\mathbb{R}_{0}\oplus
N_{2} $ given by (\ref{product}) takes the simplified form 
\begin{equation*}
\left( a,x\right) \cdot \left( b,y\right) =\left( 0,x\cdot y+a\lambda \left(
y\right) +b\rho \left( x\right) +abe\right) ,
\end{equation*}
for all $a,b\in \mathbb{R}$ and $x,y\in N_{2}.$

The conditions in Theorem \ref{kim} can be simplified to the following
conditions 
\begin{equation}
\lambda \left( x\cdot y\right) =\lambda \left( x\right) \cdot y+x\cdot
\lambda \left( y\right) -\rho \left( x\right) \cdot y  \label{id5}
\end{equation}
\begin{equation}
\rho \left( \left[ x,y\right] \right) =x\cdot \rho \left( y\right) -y\cdot
\rho \left( x\right)  \label{id6}
\end{equation}
\begin{equation}
\left[ \lambda ,\rho \right] +\rho ^{2}=R_{e}  \label{id7}
\end{equation}

Let $\phi :\mathbb{R}\rightarrow Der\left( \text{aff}\left( \mathbb{R}%
\right) \right) ,$ be a derivation of aff$\left( \mathbb{R}\right) $. Set 
\begin{equation*}
\phi \left( 1\right) =\left( 
\begin{array}{cc}
a & c \\ 
b & d%
\end{array}
\right)
\end{equation*}
relative to a basis $e_{1},e_{2}$ of aff$\left( \mathbb{R}\right) $
satisfying $\left[ e_{1},e_{2}\right] =e_{2}.$ From the identity $\phi
\left( 1\right) e_{2}=\left[ \phi \left( 1\right) e_{1},e_{2}\right] +\left[
e_{1},\phi \left( 1\right) e_{2}\right] ,$ we deduce that $a=c=0$, hence 
\begin{equation*}
\phi \left( 1\right) =\left( 
\begin{array}{cc}
0 & 0 \\ 
b & d%
\end{array}
\right)
\end{equation*}

Let 
\begin{equation*}
\rho =\left( 
\begin{array}{cc}
\alpha _{1} & \beta _{1} \\ 
\alpha _{2} & \beta _{2}%
\end{array}
\right)
\end{equation*}
relative to a basis $e_{1},e_{2}$ of aff$\left( \mathbb{R}\right) $
satisfying $\left[ e_{1},e_{2}\right] =e_{2}.$ Applying formula (\ref{id6})
to $e_{2},$ we get $\beta _{1}=0.$ Since $\phi \left( 1\right) =\lambda
-\rho ,$ we deduce that, relative to the basis $e_{1},e_{2}$, we have 
\begin{equation*}
\lambda =\left( 
\begin{array}{cc}
\alpha _{1} & 0 \\ 
\alpha _{2}+b & \beta _{2}+d%
\end{array}
\right)
\end{equation*}

Applying formula (\ref{id5}) to all products of the form $e_{i}\cdot e_{j},$ 
$i=1,2,$ we get $\alpha _{2}+b=0.$ Moreover, by applying formula (\ref{id7})
to $e_{1}$ and $e_{2},$ we get $\alpha _{1}=\beta _{2}=0$. Thus 
\begin{equation*}
\rho =\left( 
\begin{array}{cc}
0 & 0 \\ 
-b & 0%
\end{array}
\right) \text{ and }\lambda =\left( 
\begin{array}{cc}
0 & 0 \\ 
0 & d%
\end{array}
\right)
\end{equation*}

Now, since $e\in N_{2},$ then $e=te_{1}+se_{2}$ for some $t,s\in \mathbb{R}$%
. Formula (\ref{id7}) when applied to $e_{1}$ gives 
\begin{equation*}
-bde_{2}=se_{2}
\end{equation*}
for which we get that $e=x_{\circ }\cdot x_{\circ }=te_{1}-bde_{2},$ $t\in 
\mathbb{\dot{R}}$. Hence we get a left-symmetric product on $A_{3}$.

Now, let us write down the structure of $A_{3}$ using a basis. From above we
have 
\begin{eqnarray*}
e_{1}\cdot e_{2} &=&e_{2},\qquad e_{1}\cdot x_{\circ }=-be_{2} \\
x_{\circ }\cdot e_{2} &=&de_{2},\qquad x_{\circ }\cdot x_{\circ
}=te_{1}-bde_{2},\text{ }t\in \mathbb{R}
\end{eqnarray*}

Since $x_{0}\in A_{3}$ and $\pi \left( x_{0}\right) =1,$ then $x_{0}\in
A_{3}\setminus N_{2}.$ Indeed if $x_{0}\in N_{2},$ then the exactness of the
short sequence (\ref{ses1}) implies that $x_{0}\in i\left( N_{2}\right)
=\ker \pi ,$ a contradiction. This implies that, relative to a basis $%
\left\{ e_{1},e_{2},e_{3}\right\} $ of $A_{3},$ $x_{0}$ is of the form $%
x_{0}=\alpha e_{1}+\beta e_{2}+\gamma e_{3},$ where $\alpha ,\beta ,\gamma
\in \mathbb{R}$ with $\gamma \neq 0$. In this case, we can, without loss of
generality, assume that $\gamma =1$. Thus, $e_{3}=x_{0}-\alpha e_{1}-\beta
e_{2}.$ Since $e_{1}\cdot x_{\circ }=-be_{2}$ we get that 
\begin{equation*}
e_{1}\cdot e_{3}=-\left( b+\beta \right) e_{2},
\end{equation*}
also since $x_{\circ }\cdot e_{2}=de_{2}$ we get 
\begin{equation*}
e_{3}\cdot e_{2}=\left( d-\alpha \right) e_{2}.
\end{equation*}
Since $x_{\circ }\cdot x_{\circ }=te_{1}-bde_{2,}$ we deduce that 
\begin{equation*}
e_{3}\cdot e_{3}=te_{1}+\left( \alpha b+\alpha \beta -bd-\beta d\right)
e_{2}.
\end{equation*}

Since $\alpha ,\beta $ are arbitrary, we can choose $\alpha ,\beta $ so that 
$e_{3}=x_{\circ }-de_{1}-be_{2}$. Hence the left-symmetric product on $A_{3}$
is given, relative the basis $\left\{ e_{1},e_{2},e_{3}\right\} ,$ by the
non-zero relations 
\begin{eqnarray*}
e_{1}\cdot e_{2} &=&e_{2} \\
e_{3}\cdot e_{3} &=&te_{1},
\end{eqnarray*}

Notice that if $t=0,$ we obtain the complete left-symmetric algebra $N_{3,0}$%
. If $t\neq 0,$ we obtain, by setting $\widetilde{e}_{i}=e_{i},$ $i=1,2$ and 
$\widetilde{e}_{3}=\frac{1}{\sqrt{\left| t\right| }}e_{3},$ that $A_{3}$ is
isomorphic to one of the left-symmetric algebras $N_{3,2}$ or $N_{3,3}$
given above.

\begin{itemize}
\item Case 3.$\mathcal{I}$ $\cong \mathbb{R}^{2}.$
\end{itemize}

In this case, the short exact sequence (\ref{(1)}) becomes 
\begin{equation*}
0\rightarrow A_{2}\rightarrow A_{3}\rightarrow \mathbb{R}_{0}\rightarrow 0
\end{equation*}
where $A_{2}$ is a complete left-symmetric algebra whose Lie algebra is $%
\mathbb{R}^{2}$ and $\mathbb{R}_{0}$ is the trivial left-symmetric algebra
over $\mathbb{R}$.

At the Lie algebra level, we have a short exact sequence of Lie algebras of
the form 
\begin{equation*}
0\rightarrow \mathbb{R}^{2}\rightarrow \widetilde{\mathcal{G}}\rightarrow 
\mathbb{R}\rightarrow 0
\end{equation*}

Let $\phi :\mathbb{R}\rightarrow Der\left( \mathbb{R}^{2}\right) \cong
End\left( \mathbb{R}^{2}\right) ,$ be a derivation of $\mathbb{R}^{2}$.
Relative to a basis $e_{1},e_{2}$ of $\mathbb{R}^{2},$ set 
\begin{equation*}
\phi \left( 1\right) =\left( 
\begin{array}{cc}
a & c \\ 
b & d%
\end{array}
\right)
\end{equation*}

In this case, the extended Lie bracket on $\mathbb{R}\times \mathbb{R}^{2},$
given by (\ref{ext Lie bracket}), takes the simplified form 
\begin{equation*}
\left[ \left( a,x\right) ,\left( b,y\right) \right] =\left( 0,\phi \left(
a\right) y-\phi \left( b\right) x+\omega \left( a,b\right) \right) ,
\end{equation*}
for all $x,y\in \mathbb{R}^{2}$ and $a,b\in \mathbb{R}$. By setting $%
\widetilde{e}_{1}=\left( 1,0\right) $ and $\widetilde{e}_{i+1}=\left(
0,e_{i}\right) $, $i=1,2$ we obtain 
\begin{eqnarray*}
\left[ \widetilde{e}_{1},\widetilde{e}_{2}\right] &=&a\widetilde{e}_{1}+b%
\widetilde{e}_{2} \\
\left[ \widetilde{e}_{1},\widetilde{e}_{3}\right] &=&c\widetilde{e}_{1}+d%
\widetilde{e}_{2} \\
\left[ \widetilde{e}_{2},\widetilde{e}_{3}\right] &=&0
\end{eqnarray*}

By Lemma \ref{milnorj}, we obtain that, relative to the basis $e_{1},e_{2},$ 
\begin{equation*}
D=\left( 
\begin{array}{ll}
a & b \\ 
c & d%
\end{array}
\right)
\end{equation*}
with $a+d\neq 0.$ Note that, in this case, $\omega $ may not be zero, that
is, the extensions of $\mathbb{R}$ by $\mathbb{R}^{2}$ are not necessarily
semidirect products of $\mathbb{R}$ by $\mathbb{R}^{2}.$

According to Lemma \ref{milnorj}, there are five cases to be considered 
\begin{equation*}
D\cong \left( 
\begin{array}{ll}
1 & 0 \\ 
0 & 0%
\end{array}
\right) ,\text{ }\left( 
\begin{array}{ll}
1 & 0 \\ 
0 & 1%
\end{array}
\right) ,\text{ }\left( 
\begin{array}{ll}
1 & 1 \\ 
0 & 1%
\end{array}
\right) ,\text{ }\left( 
\begin{array}{ll}
1 & 0 \\ 
0 & \mu%
\end{array}
\right) \text{ or }\left( 
\begin{array}{ll}
1 & -\zeta \\ 
\zeta & 1%
\end{array}
\right) ,
\end{equation*}
where $\zeta >0$ and $0<\left| \mu \right| <1.$

Let $\sigma :\mathbb{R}_{0}\rightarrow A_{3}$ be a section and set $\sigma
\left( 1\right) =x_{\circ }\in A_{3}$ and define two linear maps $\lambda
,\rho \in End\left( A_{2}\right) $ by putting $\lambda \left( y\right)
=x_{\circ }\cdot y$ and $\rho \left( y\right) =y\cdot x_{\circ }.$ By
setting $e=x_{\circ }\cdot x_{\circ },$ we see that $e\in A_{2}.$ Let $g:%
\mathbb{R}_{0}\mathbb{\times R}_{0}\rightarrow A_{2}$ be the bilinear map
defined by $g\left( a,b\right) =\sigma \left( a\right) \cdot \sigma \left(
b\right) -\sigma \left( a\cdot b\right) .$ Since the complete left-symmetric
structure on $\mathbb{R}$ is trivial, then $g\left( a,b\right) =abe,$ or
equivalently $g\left( 1,1\right) =e.$ Also we can show that $\delta _{2}g=0,$
i.e., $g\in Z_{\lambda ,\rho }^{2}\left( \mathbb{R}_{0},A_{2}\right) .$

The extended left-symmetric product on $\mathbb{R}_{0}\oplus A_{2}$ given by
(\ref{product}) is then takes the simplified form 
\begin{equation}
\left( a,x\right) \cdot \left( b,y\right) =\left( 0,x\cdot y+a\lambda \left(
y\right) +b\rho \left( x\right) +abe\right)  \label{lsf}
\end{equation}
for all $x,y\in A_{2}$ and $a,b\in \mathbb{R}$.

The conditions in Theorem \ref{kim} can be simplified to the following
conditions 
\begin{equation}
\lambda \left( x\cdot y\right) =\lambda \left( x\right) \cdot y+x\cdot
\lambda \left( y\right) -\rho \left( x\right) \cdot y  \label{id8}
\end{equation}
\begin{equation}
x\cdot \rho \left( y\right) -y\cdot \rho \left( x\right) =0  \label{id9}
\end{equation}
\begin{equation}
\left[ \lambda ,\rho \right] +\rho ^{2}=R_{e}  \label{id10}
\end{equation}

According to Lemma \ref{r2}, we have the following cases of $A_{2}$

\begin{enumerate}
\item $A_{2}=\left\langle e_{1},e_{2}:e_{i}\cdot
e_{j}=0,i,j=1,2\right\rangle .$

Assume first that $D\cong \left( 
\begin{array}{ll}
1 & 0 \\ 
0 & 0%
\end{array}
\right) $ and let 
\begin{equation*}
\rho =\left( 
\begin{array}{cc}
\alpha _{1} & \beta _{1} \\ 
\alpha _{2} & \beta _{2}%
\end{array}
\right)
\end{equation*}
relative to the basis $e_{1},e_{2}$ of $A_{2}.$ Since $\phi \left( 1\right)
=\lambda -\rho ,$ we deduce that, relative to the basis $e_{1},e_{2}$, we
have 
\begin{equation*}
\lambda =\left( 
\begin{array}{cc}
\alpha _{1}+1 & \beta _{1} \\ 
\alpha _{2} & \beta _{2}%
\end{array}
\right)
\end{equation*}

Applying formula (\ref{id10}) to $e_{2},$ we obtain $\beta _{1}=\beta
_{2}=0. $ The same formula when applied to $e_{1}$ yields $\alpha
_{1}=\alpha _{2}=0. $ It follows that $\rho $ is identically zero and 
\begin{equation*}
\lambda =\left( 
\begin{array}{cc}
1 & 0 \\ 
0 & 0%
\end{array}
\right)
\end{equation*}

We can easily show that the condition (\ref{id10}) above is satisfied for
all $e=x_{\circ }\cdot x_{\circ }=se_{1}+te_{2},$ $s,t\in \mathbb{R}$. Hence
we get a left-symmetric product on $A_{3}$.

Now, let us write down the structure of $A_{3}$ using a basis. From above we
have 
\begin{equation*}
x_{\circ }\cdot e_{1}=e_{1},\qquad x_{\circ }\cdot x_{\circ }=se_{1}+te_{2}.
\end{equation*}

We can easily prove that $x_{0}\in A_{3}\setminus A_{2}$. This implies that,
relative to a basis $\left\{ e_{1},e_{2},e_{3}\right\} $ of $A_{3},$ $x_{0}$
is of the form $x_{0}=\alpha e_{1}+\beta e_{2}+\gamma e_{3},$ where $\alpha
,\beta ,\gamma \in \mathbb{R}$ with $\gamma \neq 0$. In this case, we can,
without loss of generality, assume that $\gamma =1$. Thus, $%
e_{3}=x_{0}-\alpha e_{1}-\beta e_{2}.$ Since $x_{\circ }\cdot e_{1}=e_{1}$
we get that 
\begin{equation*}
e_{3}\cdot e_{1}=e_{1}
\end{equation*}
also since $x_{\circ }\cdot x_{\circ }=se_{1}+te_{2,}$ we deduce that 
\begin{equation*}
e_{3}\cdot e_{3}=\left( s-\alpha \right) e_{1}+te_{2}.
\end{equation*}

Since $\alpha ,\beta $ are arbitrary, we can choose $\alpha ,\beta $ so that 
$e_{3}=x_{\circ }-se_{1}$. Hence the left-symmetric product on $A_{3}$ is
given, relative to the basis $\left\{ e_{1},e_{2},e_{3}\right\} $ of $A_{3},$
by the non-zero relations 
\begin{eqnarray*}
e_{3}\cdot e_{1} &=&e_{1} \\
e_{3}\cdot e_{3} &=&te_{2}
\end{eqnarray*}

Notice that if $t=0,$ we find the complete left-symmetric algebra $N_{3,0}$.
If $t\neq 0,$ we get, by setting $\widetilde{e}_{1}=e_{3},$ $\widetilde{e}%
_{2}=e_{1}$ and $\widetilde{e}_{3}=te_{2},$ that $A_{3}$ is isomorphic to
the complete left-symmetric algebra $N_{3,1}.$

Assume then that $D\cong \left( 
\begin{array}{ll}
1 & 0 \\ 
0 & 1%
\end{array}
\right) $ and let 
\begin{equation*}
\rho =\left( 
\begin{array}{cc}
\alpha _{1} & \beta _{1} \\ 
\alpha _{2} & \beta _{2}%
\end{array}
\right) ,
\end{equation*}
relative to the basis $e_{1},e_{2}$ of $A_{2}.$ Since $\phi \left( 1\right)
=\lambda -\rho ,$ we deduce that, relative to the basis $e_{1},e_{2}$, we
have 
\begin{equation*}
\lambda =\left( 
\begin{array}{cc}
\alpha _{1}+1 & \beta _{1} \\ 
\alpha _{2} & \beta _{2}+1%
\end{array}
\right) .
\end{equation*}

By applying formula (\ref{id10}) to $e_{1}$ and $e_{2},$ we get 
\begin{equation*}
\rho =\left( 
\begin{array}{cc}
0 & \alpha \\ 
0 & 0%
\end{array}
\right) ,\text{ }\lambda =\left( 
\begin{array}{cc}
1 & \alpha \\ 
0 & 1%
\end{array}
\right) ,\text{ }\alpha \in \mathbb{R}
\end{equation*}
and $e=x_{\circ }\cdot x_{\circ }=\alpha ^{2}e_{1}+\alpha e_{2}.$

Similarly, we find that, relative to the basis $\left\{
e_{1},e_{2},e_{3}\right\} $ of $A_{3}$ with $e_{3}=x_{\circ }+\alpha
^{2}e_{1}-\alpha e_{2}$, the left-symmetric product on $A_{3}$ is given by
the non-zero relations 
\begin{eqnarray*}
e_{3}\cdot e_{1} &=&e_{1} \\
e_{3}\cdot e_{2} &=&\alpha e_{1}+e_{2} \\
e_{2}\cdot e_{3} &=&\alpha e_{1}.
\end{eqnarray*}

Notice that if $\alpha =0,$ we get, by setting $\widetilde{e}_{1}=e_{3},$ $%
\widetilde{e}_{2}=e_{1}$ and $\widetilde{e}_{3}=e_{2},$ the complete
left-symmetric algebra $B_{3,0}$. If $t\neq 0,$ we get, by setting $%
\widetilde{e}_{1}=e_{3},$ $\widetilde{e}_{2}=e_{2}$ and $\widetilde{e}%
_{3}=\alpha e_{1},$ that $A_{3}$ is isomorphic to the complete
left-symmetric algebras $B_{3,1}.$

Assume now that $D\cong \left( 
\begin{array}{ll}
1 & 1 \\ 
0 & 1%
\end{array}
\right) ,$ and let 
\begin{equation*}
\rho =\left( 
\begin{array}{cc}
\alpha _{1} & \beta _{1} \\ 
\alpha _{2} & \beta _{2}%
\end{array}
\right) ,
\end{equation*}
relative to the basis $e_{1},e_{2}$ of $A_{2}.$ Since $D=\lambda -\rho ,$ we
deduce that, relative to the basis $e_{1},e_{2}$, we have 
\begin{equation*}
\lambda =\left( 
\begin{array}{cc}
\alpha _{1}+1 & \beta _{1}+1 \\ 
\alpha _{2} & \beta _{2}+1%
\end{array}
\right) .
\end{equation*}

By applying formula (\ref{id10}) to $e_{1}$ and $e_{2},$ we get 
\begin{equation*}
\rho =\left( 
\begin{array}{cc}
0 & \alpha \\ 
0 & 0%
\end{array}
\right) \text{, }\lambda =\left( 
\begin{array}{cc}
1 & \alpha +1 \\ 
0 & 1%
\end{array}
\right) ,\text{ }\alpha \in \mathbb{R}
\end{equation*}
and $e=x_{\circ }\cdot x_{\circ }=\alpha e_{1}+\alpha e_{2}.$

Similarly, we find that, relative to a basis $\left\{
e_{1},e_{2},e_{3}\right\} $ of $A_{3}$ with $e_{3}=x_{\circ }+2\alpha
^{2}e_{1}-\alpha e_{2}$, the left-symmetric product on $A_{3}$ is given by
the non-zero relations 
\begin{eqnarray*}
e_{3}\cdot e_{1} &=&e_{1} \\
e_{3}\cdot e_{2} &=&\left( \alpha +1\right) e_{1}+e_{2} \\
e_{2}\cdot e_{3} &=&\alpha e_{1}.
\end{eqnarray*}

Notice that if $\alpha =0,$ we get, by setting $\widetilde{e}_{1}=e_{3},$ $%
\widetilde{e}_{2}=e_{2}$ and $\widetilde{e}_{3}=e_{1},$ the complete
left-symmetric algebra $C_{3,1}$. If $\alpha \neq 0,$ we get, by setting $%
\alpha =t-1$ with $t\neq 1,$ the complete left-symmetric algebra $C_{3,t}$
where different values of $t$ give non-isomorphic complete left-symmetric
algebras.

Assume then that $D\cong \left( 
\begin{array}{ll}
1 & 0 \\ 
0 & \mu%
\end{array}
\right) ,$ where $0<\left| \mu \right| <1,$ and let 
\begin{equation*}
\rho =\left( 
\begin{array}{cc}
\alpha _{1} & \beta _{1} \\ 
\alpha _{2} & \beta _{2}%
\end{array}
\right) ,
\end{equation*}
relative to the basis $e_{1},e_{2}$ of $A_{2}.$ Since $\phi \left( 1\right)
=\lambda -\rho ,$ we deduce that, relative to the basis $e_{1},e_{2}$, we
have 
\begin{equation*}
\lambda =\left( 
\begin{array}{cc}
\alpha _{1}+1 & \beta _{1} \\ 
\alpha _{2} & \beta _{2}+\mu%
\end{array}
\right) .
\end{equation*}

By applying formula (\ref{id10}) to $e_{1}$ and $e_{2},$ we obtain that $%
\rho $ is identically zero, 
\begin{equation*}
\lambda =\left( 
\begin{array}{cc}
1 & 0 \\ 
0 & \mu%
\end{array}
\right)
\end{equation*}
and $e=x_{\circ }\cdot x_{\circ }=e_{1}+\mu e_{2}.$

Similarly, we find that, relative to a basis $\left\{
e_{1},e_{2},e_{3}\right\} $ of $A_{3}$ with $e_{3}=x_{\circ }-e_{1}-e_{2}$,
the left-symmetric product on $A_{3}$ is given by the non-zero relations 
\begin{eqnarray*}
e_{3}\cdot e_{1} &=&e_{1} \\
e_{3}\cdot e_{2} &=&\mu e_{2}.
\end{eqnarray*}

By setting $\widetilde{e}_{1}=e_{3},$ $\widetilde{e}_{2}=e_{1}$ and $%
\widetilde{e}_{3}=e_{2},$ we get the complete left-symmetric algebra $%
D_{3,1}\left( \mu \right) $ where $0<\left| \mu \right| <1.$

Assume finally that $D\cong \left( 
\begin{array}{ll}
1 & -\zeta \\ 
\zeta & 1%
\end{array}
\right) ,$ where $\zeta >0,$ and let 
\begin{equation*}
\rho =\left( 
\begin{array}{cc}
\alpha _{1} & \beta _{1} \\ 
\alpha _{2} & \beta _{2}%
\end{array}
\right)
\end{equation*}
relative to the basis $e_{1},e_{2}$ of $A_{2}.$ Since $\phi \left( 1\right)
=\lambda -\rho ,$ we deduce that, relative to the basis $e_{1},e_{2}$ above,
we have 
\begin{equation*}
\lambda =\left( 
\begin{array}{cc}
\alpha _{1}+1 & \beta _{1}-\zeta \\ 
\alpha _{2}+\zeta & \beta _{2}+1%
\end{array}
\right)
\end{equation*}

By applying formula (\ref{id10}) to $e_{1}$ and $e_{2},$ we obtain that $%
\rho $ is identically zero, 
\begin{equation*}
\lambda =\left( 
\begin{array}{cc}
1 & -\zeta \\ 
\zeta & 1%
\end{array}
\right)
\end{equation*}
and $e=x_{\circ }\cdot x_{\circ }=2\zeta e_{1}+\left( \zeta ^{2}-1\right)
e_{2}.$

Similarly, we find that, relative to a basis $\left\{
e_{1},e_{2},e_{3}\right\} $ of $A_{3}$ with $e_{3}=x_{\circ }-\zeta
e_{1}+e_{2}$, the left-symmetric product on $A_{3}$ is given by the non-zero
relations 
\begin{eqnarray*}
e_{3}\cdot e_{1} &=&e_{1}+\zeta e_{2} \\
e_{3}\cdot e_{2} &=&-\zeta e_{1}+e_{2}.
\end{eqnarray*}

Set $\widetilde{e}_{1}=e_{3},$ $\widetilde{e}_{2}=e_{1}$ and $\widetilde{e}%
_{3}=e_{2}.$ Then, the non-zero relations above become 
\begin{eqnarray*}
\widetilde{e}_{1}\cdot \widetilde{e}_{2} &=&\widetilde{e}_{2}+\zeta 
\widetilde{e}_{3}, \\
\widetilde{e}_{1}\cdot \widetilde{e}_{3} &=&-\zeta \widetilde{e}_{2}+%
\widetilde{e}_{3}.
\end{eqnarray*}

We set 
\begin{equation*}
E_{3,\zeta }=\left\langle e_{1},e_{2},e_{3}:e_{1}\cdot e_{2}=e_{2}+\zeta
e_{3},e_{1}\cdot e_{3}=-\zeta e_{2}+e_{3},\zeta >0\right\rangle .
\end{equation*}

\item $A_{2}=\left\langle e_{1},e_{2}:e_{2}\cdot e_{2}=e_{1}\right\rangle .$

Let 
\begin{equation*}
\rho =\left( 
\begin{array}{cc}
\alpha _{1} & \beta _{1} \\ 
\alpha _{2} & \beta _{2}%
\end{array}
\right)
\end{equation*}
relative to the basis $e_{1},e_{2}$ of $A_{2}.$ By applying formula (\ref%
{id9}) to $e_{1}$ and $e_{2},$ we get that $\alpha _{2}=0.$

Assume first that $D\cong \left( 
\begin{array}{ll}
1 & 0 \\ 
0 & 0%
\end{array}
\right) .$ Then, as $\phi \left( 1\right) =\lambda -\rho ,$ we deduce that,
relative to the basis $e_{1},e_{2}$, we have 
\begin{equation*}
\lambda =\left( 
\begin{array}{cc}
\alpha _{1}+1 & \beta _{1} \\ 
0 & \beta _{2}%
\end{array}
\right)
\end{equation*}

By applying formula (\ref{id10}) to $e_{1}$ and $e_{2},$ we get that $\alpha
_{1}=\beta _{2}=0.$ Moreover, by applying formula (\ref{id8}) to all
products of the form $e_{i}\cdot e_{j},$ $i,j=1,2$, we get that $1=0,$ a
contradiction. Thus $D$ can not be of this form. Similarly, we can prove
that $D$ can not be of the forms $\left( 
\begin{array}{ll}
1 & 0 \\ 
0 & 1%
\end{array}
\right) ,$ $\left( 
\begin{array}{ll}
1 & 1 \\ 
0 & 1%
\end{array}
\right) ,$ or $\left( 
\begin{array}{ll}
1 & -\zeta \\ 
\zeta & 1%
\end{array}
\right) ,$ where $\zeta >0.$

Assume that $D\cong \left( 
\begin{array}{ll}
1 & 0 \\ 
0 & \mu%
\end{array}
\right) ,$ where $0<\left| \mu \right| <1,$ Then, as $\phi \left( 1\right)
=\lambda -\rho ,$ we deduce that 
\begin{equation*}
\lambda =\left( 
\begin{array}{cc}
\alpha _{1}+1 & \beta _{1} \\ 
0 & \beta _{2}+\mu%
\end{array}
\right)
\end{equation*}

By applying formula (\ref{id10}) to $e_{1}$ and $e_{2},$ we get that $\alpha
_{1}=\beta _{2}=0.$ Moreover, by applying formula (\ref{id8}) to all
products of the form $e_{i}\cdot e_{j},$ $i,j=1,2,$ we get that $\mu =\frac{1%
}{2}.$ Thus 
\begin{equation*}
\rho =\left( 
\begin{array}{cc}
0 & \alpha \\ 
0 & 0%
\end{array}
\right) \text{, }\lambda =\left( 
\begin{array}{ll}
1 & \alpha \\ 
0 & \frac{1}{2}%
\end{array}
\right) ,\text{ }\alpha \in \mathbb{R}
\end{equation*}
and $e=x_{\circ }\cdot x_{\circ }=te_{1}+\frac{1}{2}\alpha e_{2},$ $t\in 
\mathbb{R}$.

Similarly, we find that, relative to a basis $\left\{
e_{1},e_{2},e_{3}\right\} $ of $A_{3}$ with $e_{3}=x_{\circ }+\left( \alpha
^{2}-t\right) e_{1}-\alpha e_{2}$, the left-symmetric product on $A_{3}$ is
given by the non-zero relations 
\begin{eqnarray*}
e_{2}\cdot e_{2} &=&e_{1}, \\
e_{3}\cdot e_{1} &=&e_{1}, \\
e_{3}\cdot e_{2} &=&\frac{1}{2}e_{2},
\end{eqnarray*}
\end{enumerate}

Set $\widetilde{e}_{1}=e_{3},$ $\widetilde{e}_{2}=e_{1}$ and $\widetilde{e}%
_{3}=e_{2}.$ Then the non-zero relations above become 
\begin{eqnarray*}
\widetilde{e}_{2}\cdot \widetilde{e}_{2} &=&\widetilde{e}_{1}, \\
\widetilde{e}_{1}\cdot \widetilde{e}_{2} &=&\widetilde{e}_{2}, \\
\widetilde{e}_{1}\cdot \widetilde{e}_{3} &=&\frac{1}{2}\widetilde{e}_{3}
\end{eqnarray*}
We set 
\begin{equation*}
D_{3,2}=\left\langle e_{1},e_{2},e_{3}:e_{2}\cdot e_{2}=e_{1},e_{1}\cdot
e_{2}=e_{2},e_{1}\cdot e_{3}=\frac{1}{2}e_{3}\right\rangle .
\end{equation*}

\subsection{The classification}

We can now state the main result of this paper

\begin{theorem}
\label{complete3}\label{th1}Let $A_{3}$ be a three dimensional complete
left-symmetric algebra whose associated Lie algebra $\mathcal{G}$ is
solvable and non-unimodular. Then $A_{3}$ is isomorphic to one of the
following left-symmetric algebras:

\begin{equation*}
\begin{tabular}{|c|c|c|c|}
\hline
Name & Non-zero product & Lie algebra & Remarks \\ \hline
$N_{3,0}$ & $e_{1}\cdot e_{2}=e_{2}$ & $\mathcal{G}_{3,1}$ & N, D, S \\ 
\hline
$N_{3,1}$ & $e_{1}\cdot e_{1}=e_{3},$ $e_{1}\cdot e_{2}=e_{2}$ & $\mathcal{G}%
_{3,1}$ & N, D, S \\ \hline
$N_{3,2}$ & $e_{1}\cdot e_{2}=e_{2},$ $e_{3}\cdot e_{3}=e_{1}$ & $\mathcal{G}%
_{3,1}$ & S \\ \hline
$N_{3,3}$ & $e_{1}\cdot e_{2}=e_{2},$ $e_{3}\cdot e_{3}=-e_{1}$ & $\mathcal{G%
}_{3,1}$ & S \\ \hline
$B_{3,0}$ & $e_{1}\cdot e_{2}=e_{2},$ $e_{1}\cdot e_{3}=e_{3}$ & $\mathcal{G}%
_{3,2}$ & N, D, S \\ \hline
$B_{3,1}$ & $\left. 
\begin{array}{c}
e_{1}\cdot e_{2}=e_{2}+e_{3}, \\ 
e_{2}\cdot e_{1}=e_{3},e_{1}\cdot e_{3}=e_{3}%
\end{array}
\right. $ & $\mathcal{G}_{3,2}$ & D \\ \hline
$C_{3,1}$ & $e_{1}\cdot e_{2}=e_{2}+e_{3},$ $e_{1}\cdot e_{3}=e_{3}$ & $%
\mathcal{G}_{3,3}$ & N, D, S \\ \hline
$C_{3,t}$ & $\left. 
\begin{array}{c}
e_{1}\cdot e_{2}=e_{2}+te_{3},\text{ }e_{1}\cdot e_{3}=e_{3}, \\ 
e_{2}\cdot e_{1}=\left( t-1\right) e_{3},,t\neq 1%
\end{array}
\right. $ & $\mathcal{G}_{3,3}$ & D \\ \hline
$D_{3,1}\left( \mu \right) $ & $\left. 
\begin{array}{c}
e_{1}\cdot e_{2}=e_{2}, \\ 
e_{1}\cdot e_{3}=\mu e_{3},\text{ }0<\left| \mu \right| <1%
\end{array}
\right. $ & $\mathcal{G}_{3,4}^{\mu }$ & N, D, S \\ \hline
$D_{3,2}$ & $\left. 
\begin{array}{c}
e_{1}\cdot e_{2}=e_{2},e_{1}\cdot e_{3}=\frac{1}{2}e_{3}, \\ 
e_{2}\cdot e_{2}=e_{1}%
\end{array}
\right. $ & $\mathcal{G}_{3,4}^{\frac{1}{2}}$ & N \\ \hline
$E_{3,1}\left( \zeta \right) $ & $\left. 
\begin{array}{c}
e_{1}\cdot e_{2}=e_{2}+\zeta e_{3}, \\ 
e_{1}\cdot e_{3}=-\zeta e_{2}+e_{3},\text{ }\zeta >0%
\end{array}
\right. $ & $\mathcal{G}_{3,5}^{\zeta }$ & N, D, S \\ \hline
\end{tabular}%
\end{equation*}

Here, the letter N means that the left-symmetric algebra $A_{3}$ is Novikov,
the letter D means that $A_{3}$ is derivation and the letter S means that $%
A_{3}$ satisfying $\left[ x,y\right] \cdot z=0$ for all $x,y,z\in $ $A_{3}.$
\end{theorem}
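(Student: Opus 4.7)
The plan is to assemble the case analysis developed throughout Section~3 into a single statement. By Proposition~\ref{p1}, $A_{3}$ is not simple, so it contains a proper two-sided ideal $I$, giving a short exact sequence $0\to I\to A_{3}\to J\to 0$ of left-symmetric algebras. Proposition~\ref{completeness} guarantees that both $I$ and $J$ are complete. Lemma~\ref{ideal} promotes $I$ to a Lie-algebra ideal $\mathcal{I}\subseteq\mathcal{G}$, and Lemma~\ref{ideal 2} restricts $\mathcal{I}$ to one of the three isomorphism types $\mathbb{R}$, $\mathbb{R}^{2}$, or $\mathrm{aff}(\mathbb{R})$. This produces the three cases that structure the proof.

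In each case I would first pin down, up to left-symmetric isomorphism, the complete structures available on the ideal and on the quotient; Lemmas~\ref{r2} and \ref{g2} supply the short lists. Then I would use Theorem~\ref{kim} to parametrize compatible extensions by the triple $(\lambda,\rho,g)$, specializing its five conditions to the small dimensions at hand. Simultaneously, the induced Lie-algebra extension must reproduce one of the Milnor forms $\mathcal{G}_{3,1},\ldots,\mathcal{G}_{3,5}^{\zeta}$; since $\mathcal{G}$ is non-unimodular, the trace of the relevant $D$-matrix is non-zero, and this eliminates several candidate pairs $(\phi,\omega)$ at the outset (for instance, forcing $\phi(e_{2})=0$ after the usual basis change).

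The main computational step, and the place that requires care, is solving the simplified identities \eqref{id1}--\eqref{id2}, \eqref{id3}--\eqref{id4}, \eqref{id5}--\eqref{id7}, and \eqref{id8}--\eqref{id10} for $\lambda$, $\rho$ and $e=x_{\circ}\cdot x_{\circ}$, and then choosing a representative of each cohomology class $[g]\in H^{2}_{\lambda,\rho}(K,V)$ lying in a single orbit of $\mathrm{Aut}(E)\times\mathrm{Aut}(A)$ via Proposition~\ref{isomo}. Once $[g]$ is in normal form, formula~\eqref{product} produces the extended product on $A\oplus E$, and a concluding change of basis rewrites it in one of the shapes listed in the table. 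The delicate point is orbit bookkeeping: many cocycles are cohomologous, and the automorphism group of the quotient collapses further representatives, so one must argue that the final list is both exhaustive and irredundant.

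Finally, I would collect the representatives produced by the three cases into one table, check that the algebras appearing in more than one case (notably $N_{3,0}$, $N_{3,1}$, $B_{3,0}$, $B_{3,1}$, $C_{3,1}$, $C_{3,t}$, $D_{3,1}(\mu)$) are genuinely identical when identified by the given basis changes, and confirm that no further collapsing occurs across cases. The Novikov, derivation, and $[x,y]\cdot z=0$ markings in the last column are then verified by direct substitution into each listed product, which is routine. The expected obstacle is not any single algebraic manipulation but rather the consistency check that the three choices of ideal $\mathcal{I}$ partition all extensions without duplication beyond what the table records.
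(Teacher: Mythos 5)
Your proposal follows essentially the same route as the paper: non-simplicity via Proposition \ref{p1}, the resulting short exact sequence with completeness of $I$ and $J$ from Proposition \ref{completeness}, the three-way case split on $\mathcal{I}\cong\mathbb{R},\ \mathbb{R}^{2},\ \mathrm{aff}(\mathbb{R})$ from Lemmas \ref{ideal} and \ref{ideal 2}, the extension data $(\lambda,\rho,g)$ from Theorem \ref{kim} with cocycles normalized by the $\mathrm{Aut}(E)\times\mathrm{Aut}(A)$ action of Proposition \ref{isomo}, and a final basis change and cross-case deduplication to produce the table. This is precisely the structure of the paper's Section 3 computation, so your outline is a faithful (if uncomputed) blueprint of the given proof.
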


\begin{remark}
We note that left-symmetric algebras satisfying the identity $\left( x\cdot
y\right) \cdot z=\left( y\cdot x\right) \cdot z\ \ $for all $x,y,z\in A$ (or
equivalently, the identity $\left[ x,y\right] \cdot z=0\ \ \ $for all $%
x,y,z\in A$) are of special interest because they correspond to locally
simply transitive affine actions of Lie groups $G$ on a vector space $E$
such that the commutator subgroup $\left[ G,G\right] $\ is acting by
translations. These left-symmetric algebras have been considered and studied
in \cite{guediri}.
\end{remark}

We note that the mapping $X\rightarrow \left( L_{X},X\right) $ is a Lie
algebra representation of $\mathcal{G}$ in aff$\left( \mathbb{R}^{3}\right)
=End\left( \mathbb{R}^{3}\right) \bigoplus \mathbb{R}^{3}.$ By using the
exponential maps, Theorem \ref{complete3} can now be stated, in terms of
simply transitive actions of subgroups of the affine group $Aff(\mathbb{R}%
^{3})=GL\left( \mathbb{R}^{3}\right) \mathbb{o}\mathbb{R}^{3},$ as follows

To state it, define the continuous functions $f,g,h,k$ and $\phi $ by 
\begin{eqnarray*}
f\left( x\right) &=&\left\{ 
\begin{tabular}{cc}
$\frac{e^{x}-1}{x},$ & $x\neq 0$ \\ 
$1,$ & $x=0$%
\end{tabular}
\right. ,\qquad \qquad g\left( x\right) =\left\{ 
\begin{tabular}{ll}
$\frac{e^{x}-x-1}{x^{2}},$ & $x\neq 0$ \\ 
$\frac{1}{2}$ & $x=0$%
\end{tabular}
\right. \\
h\left( x\right) &=&\left\{ 
\begin{tabular}{cc}
$\frac{\cos x-1}{x}+\frac{x}{2},$ & $x\neq 0$ \\ 
$0,$ & $x=0$%
\end{tabular}
\right. ,\qquad k\left( x\right) =\left\{ 
\begin{tabular}{cc}
$\frac{\sin x-x}{x},$ & $x\neq 0$ \\ 
$0,$ & $x=0$%
\end{tabular}
\right. \\
\phi \left( x\right) &=&\sum_{n=1}^{\infty }\frac{nx^{n}}{\left( n+1\right) !%
}
\end{eqnarray*}

\begin{theorem}
\label{sim action}Suppose that the Lie group $G$ of the non-unimodular Lie
algebra $\mathcal{G}$ of dimension 3 acts simply transitively by affine
transformations on $\mathbb{R}^{3}.$ Then, as a subgroup of $Aff(\mathbb{R}%
^{3}),$ $G$ is conjugate to one of the following subgroups:

\begin{equation*}
\begin{tabular}{l}
$G_{A_{3,0}}=\left\{ \left( 
\begin{array}{lll}
1 & 0 & 0 \\ 
0 & e^{a} & 0 \\ 
0 & 0 & 1%
\end{array}%
\right) \left[ 
\begin{array}{l}
a \\ 
bf\left( a\right) \\ 
c%
\end{array}%
\right] ,\text{ }a,b,c\in \mathbb{R}\right\} $ \\ 
$G_{A_{3,1}}=\left\{ \left( 
\begin{array}{lll}
1 & 0 & 0 \\ 
0 & e^{a} & 0 \\ 
a & 0 & 1%
\end{array}%
\right) \left[ 
\begin{array}{l}
a \\ 
bf\left( a\right) \\ 
c+\frac{1}{2}a^{2}%
\end{array}%
\right] ,\text{ }a,b,c\in \mathbb{R}\right\} $ \\ 
$G_{A_{3,2}}=\left\{ \left( 
\begin{array}{lll}
1 & 0 & c \\ 
0 & e^{a} & 0 \\ 
0 & 0 & 1%
\end{array}%
\right) \left[ 
\begin{array}{l}
a+\frac{1}{2}c^{2} \\ 
bf\left( a\right) \\ 
c%
\end{array}%
\right] ,\text{ }a,b,c\in \mathbb{R}\right\} $ \\ 
$G_{A_{3,3}}=\left\{ \left( 
\begin{array}{lll}
1 & 0 & -c \\ 
0 & e^{a} & 0 \\ 
0 & 0 & 1%
\end{array}%
\right) \left[ 
\begin{array}{l}
a-\frac{1}{2}c^{2} \\ 
bf\left( a\right) \\ 
c%
\end{array}%
\right] ,\text{ }a,b,c\in \mathbb{R}\right\} $ \\ 
$G_{B_{3,0}}=\left\{ \left( 
\begin{array}{lll}
1 & 0 & 0 \\ 
0 & e^{a} & 0 \\ 
0 & 0 & e^{a}%
\end{array}%
\right) \left[ 
\begin{array}{l}
a \\ 
bf\left( a\right) \\ 
cf\left( a\right)%
\end{array}%
\right] ,\text{ }a,b,c\in \mathbb{R}\right\} $ \\ 
$G_{B_{3,1}}=\left\{ \left( 
\begin{array}{lll}
1 & 0 & 0 \\ 
0 & e^{a} & 0 \\ 
bf\left( a\right) & ae^{a} & e^{a}%
\end{array}%
\right) \left[ 
\begin{array}{l}
a \\ 
bf\left( a\right) \\ 
\left( ab+c\right) f\left( a\right)%
\end{array}%
\right] ,\text{ }a,b,c\in \mathbb{R}\right\} $ \\ 
$G_{C_{3,1}}=\left\{ \left( 
\begin{array}{lll}
1 & 0 & 0 \\ 
0 & e^{a} & 0 \\ 
0 & ae^{a} & e^{a}%
\end{array}%
\right) \left[ 
\begin{array}{l}
a \\ 
bf\left( a\right) \\ 
cf\left( a\right) +b\phi \left( a\right)%
\end{array}%
\right] ,\text{ }a,b,c\in \mathbb{R}\right\} $ \\ 
$G_{C_{3,t}}=\left\{ \left( 
\begin{array}{lll}
1 & 0 & 0 \\ 
0 & e^{a} & 0 \\ 
\left( t-1\right) bf\left( a\right) & tae^{a} & e^{a}%
\end{array}%
\right) \left[ 
\begin{array}{l}
a \\ 
bf\left( a\right) \\ 
\left( tab+c-b\right) f\left( a\right) +b%
\end{array}%
\right] ,\text{ }a,b,c\in \mathbb{R},\text{ }t\neq 1\right\} $ \\ 
$G_{D_{3,1}\left( \mu \right) }=\left\{ \left( 
\begin{array}{lll}
1 & 0 & 0 \\ 
0 & e^{a} & 0 \\ 
0 & 0 & e^{\mu a}%
\end{array}%
\right) \left[ 
\begin{array}{l}
a \\ 
bf\left( a\right) \\ 
cf\left( \mu a\right)%
\end{array}%
\right] ,\text{ }a,b,c\in \mathbb{R}\right\} \text{ },0<\left\vert \mu
\right\vert <1$ \\ 
$G_{D_{3,2}}=\left\{ \left( 
\begin{array}{lll}
1 & bf\left( a\right) & 0 \\ 
0 & e^{a} & 0 \\ 
0 & 0 & e^{\frac{1}{2}a}%
\end{array}%
\right) \left[ 
\begin{array}{l}
a+b^{2}g\left( a\right) \\ 
bf\left( a\right) \\ 
cf\left( \frac{a}{2}\right)%
\end{array}%
\right] ,\text{ }a,b,c\in \mathbb{R}\right\} $ \\ 
$G_{E_{3}\left( \zeta \right) }=\left\{ 
\begin{array}{c}
\left( 
\begin{array}{lll}
1 & 0 & 0 \\ 
0 & e^{a}\cos \zeta a & -e^{a}\sin \zeta a \\ 
0 & e^{a}\sin \zeta a & e^{a}\cos \zeta a%
\end{array}%
\right) \\ 
\left[ 
\begin{array}{l}
a \\ 
b\left( f\left( a\right) +k\left( \zeta a\right) \right) +c\left( h\left(
\zeta a\right) -\zeta \phi \left( a\right) \right) \\ 
b\left( \zeta \phi \left( a\right) -h\left( \zeta a\right) \right) +c\left(
f\left( a\right) +k\left( \zeta a\right) \right)%
\end{array}%
\right] ,\text{ }a,b,c\in \mathbb{R}\text{,}\zeta >0%
\end{array}%
\right\} $%
\end{tabular}%
\end{equation*}
\end{theorem}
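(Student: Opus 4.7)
The plan is to combine the bijection between complete left-symmetric structures on $\mathcal{G}$ and simply transitive affine actions of the simply connected group $G$, recalled in Section 2, with the classification already obtained in Theorem \ref{complete3}. Any simply transitive affine action of $G$ on $\mathbb{R}^{3}$ pulls back to a complete left-invariant affine connection on $G$, hence to a compatible complete left-symmetric product on $\mathcal{G}$, which by Theorem \ref{complete3} must be isomorphic to one of the eleven algebras in the table. Since two simply transitive actions of $G$ attached to isomorphic complete left-symmetric algebras differ by conjugation in $Aff(\mathbb{R}^{3})$, it suffices to produce, for each entry $A_{3}$ of the table, one explicit realization of $G$ inside $Aff(\mathbb{R}^{3})$.

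For each such $A_{3}$, I would use the Lie algebra morphism
\[
\widehat{L}\colon \mathcal{G} \longrightarrow \text{aff}(\mathbb{R}^{3}), \qquad X \longmapsto (L_{X},\,X),
\]
identified with the $4\times 4$ affine matrix whose upper-left $3\times 3$ block is $L_{X}$ and whose last column is $(X,0)^{\top}$. The image of $\exp \widehat{L}$ is then exactly the desired subgroup of $Aff(\mathbb{R}^{3})$. For $X=a e_{1}+b e_{2}+c e_{3}$ the linear part of $\exp \widehat{L}(X)$ is $e^{L_{X}}$ and its translation part is $\bigl(\sum_{n\ge 0}\tfrac{L_{X}^{n}}{(n+1)!}\bigr) X$. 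For purely diagonal $L_{X}$ the scalar series reduces to $f(x)=(e^{x}-1)/x$, which explains the form of $G_{A_{3,0}}$, $G_{B_{3,0}}$, and $G_{D_{3,1}(\mu)}$.

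The remaining nine cases each involve a non-semisimple or complex Jordan structure for $L_{X}$. For $N_{3,1}$, $N_{3,2}$, $N_{3,3}$ the square-zero nilpotent piece of $L_{X}$ yields the quadratic corrections $\tfrac12 a^{2}$ and $\pm \tfrac12 c^{2}$ in the translation vector. For $B_{3,1}$, $C_{3,1}$ and $C_{3,t}$, coupling a Jordan block to the diagonal part gives the function $\phi(x)=\sum_{n\ge 1} n x^{n}/(n+1)!$ upon evaluating the translation series on the off-diagonal entry, while for $D_{3,2}$ the same mechanism, now with a nilpotent off-diagonal term in $L_{X}$ acting on the $a$-component, produces $g(x)=(e^{x}-x-1)/x^{2}$ in the entry $a+b^{2}g(a)$. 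Finally, for $E_{3,1}(\zeta)$ the complex eigenvalues $1\pm i\zeta$ of the $2\times 2$ rotation-dilation block give rise, after splitting the exponential into real and imaginary parts, to the trigonometric correction terms encoded by $h$ and $k$.

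The main obstacle is purely computational: correctly tracking the translation series $\sum_{n\ge 0} L_{X}^{n}/(n+1)!$ in the cases $B_{3,1}$, $C_{3,t}$ and $E_{3,1}(\zeta)$, where nondiagonal blocks couple several coordinates and the resulting translations mix the auxiliary functions $f, g, h, k, \phi$ in a way that must be verified entry by entry. Once the eleven exponential images are obtained and matched, up to the standard change of basis from $(e_{1},e_{2},e_{3})$ to the canonical basis of $\mathbb{R}^{3}$, with the subgroups listed, simple transitivity is automatic from the completeness of $A_{3}$, as recorded in \cite{kim1}.
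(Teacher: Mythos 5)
Your proposal is correct and follows essentially the same route as the paper: for each algebra in Theorem \ref{complete3} one forms the affine representation $X\mapsto (L_{X},X)$ of $\mathcal{G}$ in $\mathrm{aff}(\mathbb{R}^{3})$, exponentiates (with translation part $\bigl(\sum_{n\ge 0}L_{X}^{n}/(n+1)!\bigr)X$, whence the functions $f,g,h,k,\phi$), and invokes the correspondence between complete left-symmetric structures and simply transitive affine actions up to conjugacy. The paper itself gives no more detail than this, so your outline, together with the entry-by-entry computations you flag, is exactly what is needed.
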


\pagebreak

Department of Mathematics,

College of Science,

King Saud University,

P.O.Box 2455, Riyadh 11451

Saudi Arabia

\smallskip

\bigskip

E-mail addresses: mguediri@ksu.edu.sa (M. Guediri),

~~~$~~$\ \ \ \ \ \ \ \ \ \ \ \ \ \ \ \ \ \ \ \ \ ksf\_math@hotmail.com (K.
Al-Balawi).

\end{document}